\documentclass{amsart}
\usepackage{amsmath,amsfonts,amsthm,amssymb,indentfirst,epic,xurl,graphics,needspace}
% xurl makes \url willing to break lines at hyphens

\setlength{\textwidth}{6.5in}
\setlength{\textheight}{9.0in}
\setlength{\evensidemargin}{0in}
\setlength{\oddsidemargin}{0in}
\setlength{\topmargin}{-.4in}
\sloppy

\setlength{\mathsurround}{.167em}

\newtheorem{theorem}{Theorem}
\newtheorem{lemma}[theorem]{Lemma}
\newtheorem{corollary}[theorem]{Corollary}
\newtheorem{proposition}[theorem]{Proposition}
\newtheorem{definition}[theorem]{Definition}
\newtheorem{question}[theorem]{Question}

\numberwithin{equation}{section}
\numberwithin{theorem}{section}

\renewcommand{\r}{\mathrm}
\newcommand{\abs}{\r{abs}}

\mathchardef\mhyphen="2D
\newcommand{\2}{\mathbf{2}}
\usepackage[T1]{fontenc} % to get \k{e} in J\k{e}drzej

\raggedbottom

\begin{document}

\begin{center}
\texttt{Comments, corrections,
and related references welcomed, as always!}\\[.5em]
{\TeX}ed \today
\\[.5em]
\vspace{2em}
\end{center}

\title%
[Dimensions of products of posets]%
{Some frustrating questions on \\
dimensions of products of posets}
\thanks{%
Archived at \url{http://arxiv.org/abs/2312.12615}\,.
Readable at \url{http://math.berkeley.edu/~gbergman/papers/}.
The latter version may be revised more frequently than the former.
After publication, any updates, errata, related references,
etc., found will be noted at
\url{http://math.berkeley.edu/~gbergman/papers/abstracts}\,.\\
\hspace*{1.5em}Data sharing not applicable, as no datasets were
generated or analyzed in this work.
}

\subjclass[2020]{Primary: 06A07.
%                         combinatorics_of_posets
Secondary: 03B10.
%          1st-order logic
}
\keywords{dimension of a product of posets}

\author{George M.\ Bergman}
\address{Department of Mathematics\\
University of California\\
Berkeley, CA 94720-3840, USA}
\email{gbergman@math.berkeley.edu}

\begin{abstract}
The definition of the dimension of a poset is recalled.
For a subposet $P$ of a direct product of $d>0$ chains,
and an integer $n>0,$ a condition is developed which
implies that for any
family of $n$ chains $(T_j)_{j\in n},$ one has
\mbox{$\dim(P\times\prod_{j\in n}T_j)\leq d.$}
Applications are noted.

Open questions, old and new, on dimensions of product posets
are stated, and some other numerical invariants of posets that seem
useful for studying these questions are developed.
Some variants of the concept of the dimension of
a poset from the literature are also recalled.

In a final section, independent of the other results, it is
noted that by the compactness theorem of first-order logic,
an infinite poset $P$ has finite dimension $d$ if and only if $d$
is the supremum of the dimensions of its finite subposets.
\end{abstract}
\maketitle
% - - - - - - - - - - - - - - - - - - - - - - - - - - - - - -

\section{Definitions and examples}\label{S.defs}

We assume familiarity with the definition of a
{\em partially ordered set}, for which we will use the
short term {\em poset}, and of the special case
of a {\em totally ordered set}, also called a {\em chain}.
We will formally consider a poset $P$ to be an ordered
pair $(|P|,\,\leq_P),$ where $|P|$ is the underlying set
and $\leq_P$ the order relation; but when there is no danger of
ambiguity, we shall write $p\in P$ to mean $p\in |P|,$
and $\leq$ for~$\leq_P.$
We will follow standard notational conventions such as
letting $x\geq y$ denote $y\leq x,$
$<$ the conjunction of $\leq$ with $\neq,$
and $\nleq$ the negation of $\leq.$

Posets will always be understood to be nonempty.

Recall that a set-map $f:P\to Q$ between posets is called
{\em isotone} if
\begin{equation}\begin{minipage}[c]{35pc}\label{d.isotone}
$p\,\leq\,p' \implies f(p)\leq f(p')$\ \ for all $p,\,p'\in P,$
\end{minipage}\end{equation}
and an {\em embedding} if it is isotone and also satisfies
\begin{equation}\begin{minipage}[c]{35pc}\label{d.embdg}
$p\,\nleq\,p' \implies f(p)\nleq f(p')$\ \ for all $p,\,p'\in P.$
\end{minipage}\end{equation}
Clearly, an embedding of posets is one-to-one; but a
one-to-one isotone map need not be an embedding.

If $\leq$ is a partial ordering on a set $X,$ we will call
a partial ordering $\leq'$ on $X$ a {\em strengthening} of
$\leq$ if $\{(x,\,y)\,|\,x\leq y\}\subseteq
\{(x,\,y)\,|\,x\leq' y\};$ i.e., if the identity map of
$X$ is an isotone map \mbox{$(X,\,\leq)\to (X,\,\leq').$}

A {\em linearization} of a partial ordering $\leq$ on a
set $X$ means a {\em total} ordering of $X$ that
is a strengthening of $\leq.$
It is easy to verify that every partial ordering on a
set admits a linearization, and in fact that
\begin{equation}\begin{minipage}[c]{35pc}\label{d.lnzn}
Every partial ordering $\leq$ on a set $X$ is (as a set of
ordered pairs) the intersection of its \mbox{linearizations}
(Szpilrajn's Theorem \cite{Sz}).
\end{minipage}\end{equation}
(Idea of proof:  Given $x,y\in X$ such that $x\nleq y,$ show
that there is a strengthening $\leq'$ of $\leq$ such that $x>' y.$
Iterating this process, transfinitely if necessary, we get
a linearization of $\leq.$
Moreover, by our choice of the pair with $x\nleq y$ that we start with,
we can insure that any prechosen order-relation between a pair
of elements that does not
hold under $\leq$ fails to hold in some linearization.
Hence we have~\eqref{d.lnzn}.)

By the {\em product} $P\times Q$ of two posets $P$ and $Q$ one
understands the poset whose underlying set is
$|P|\times|Q|,$ ordered so that $(p,q)\leq (p',q')$
if and only if $p\leq p'$ and $q\leq q';$
and analogously for products of larger families of posets.
(These are in fact products in the category of posets and isotone
maps, though we shall not use category-theoretic language here.)
Thus~\eqref{d.lnzn} says
that every poset $P$ embeds in the product of the chains
obtained from $P$ using linearizations of $\leq_P.$
Occasionally, I shall refer to a product of posets as their
`direct product', when this seems desirable for clarity.

We now come to the concept we will be studying in this note,
due to B.\ Dushnik and E.\ W.\ Miller~\cite{D+M}:

\begin{definition}\label{D.dim}
The {\em dimension} $\dim(P)$ of a poset $P$ is the least cardinal
$\kappa$ such that

\textup{(i)}\ \ the relation $\leq_P$ is an
intersection of $\kappa$ total orderings on $|P|,$\\
equivalently,

\textup{(ii)}\ \ $P$ is embeddable in a product of $\kappa$ totally
ordered sets.
\end{definition}

In the above  definition, the implication (i)$\implies$(ii) is clear.
To get (ii)$\implies$(i) $(=$\cite[Theorem~10.4.2]{Ore})
first consider any isotone map
$f:P\to T$ where $T$ is a totally ordered set.
For each $t\in T,$ regard the inverse image of $t$ in $P$
as a subposet, and choose a linearization $\leq_t$ of
the partial ordering of that subposet of $P.$
We can now strengthen $\leq_P$ to a linear ordering $\leq'$
of $|P|$ by making $p\leq' q$ if either
$f(p) <_T f(q),$ or $f(p) = f(q)$ and $p\leq_{f(p)} q.$
Thus, given an embedding of $P$ in a product of $\kappa$
posets $T_\alpha,$ if we construct
from each of the projections $f_\alpha: P\to T_\alpha$ a linearization
$\leq'_\alpha$ of $\leq_P$ as above, we see that the intersection of
these linearizations will again be the partial ordering~$\leq_P.$

We understand the product of the empty family of sets to
be a singleton.
Thus a poset has dimension~$0$ if
and only if its underlying set is a singleton.

In the literature on dimensions of posets, condition~(i)
above is generally
the preferred definition; but here we will more often use~(ii).

This note will focus almost entirely on {\em finite-dimensional}
posets, though we will allow underlying sets of these to be infinite.
In indexing finite families of maps etc.,
we will follow the set-theorists' convention
\begin{equation}\begin{minipage}[c]{35pc}\label{d.n=}
For $n$ a natural number, $n\ =\ \{0,\dots,n{-}1\}.$
\end{minipage}\end{equation}

For positive integers $n,$ two important examples of posets of
dimension $n$ are:
\begin{equation}\begin{minipage}[c]{35pc}\label{d.2^n}
The $\!n\!$-cube $\2^n,$ i.e., the $\!n\!$-fold
direct product $\2\times\dots\times \2,$ where
$\2$ denotes the poset $\{0,1\}$ with the ordering $0 < 1,$
\end{minipage}\end{equation}
and
\begin{equation}\begin{minipage}[c]{35pc}\label{d.S_n}
For $n\geq 2,$ the ``standard example'' $S_n,$ whose underlying set
consists of $2n$ elements $\{a_0,\dots,a_{n-1}, b_0,\dots,b_{n-1}\},$
with the ordering that makes each $a_i$ less than every $b_j$
{\em other} than $b_i,$ and with no other order-relations between
distinct elements.
\end{minipage}\end{equation}

For $n\geq 3,$ $S_n$ can be identified with a subposet of $\2^n,$ by
identifying each $a_i$ with the $\!n\!$-tuple that has
value $1$ in the $\!i\!$-th coordinate and $0$
elsewhere, and each $b_i$ with the $\!n\!$-tuple that has
value $0$ in the $\!i\!$-th coordinate and $1$ elsewhere.
$(S_n$ is sometimes called the ``crown'' of dimension $n.$
This nicely fits the appearance of the diagram when
$n=3,$ but not so nicely for higher $n,$ or $n=2.$
In~\cite{F+T+W}, a different $\!2n\!$-element subposet of
$\2^n$ is called a ``crown''
$C_n$~\cite[Fig.~6, last two diagrams]{F+T+W},
which agrees with $S_n$ only for $n=3.)$

To see that~\eqref{d.2^n} and~\eqref{d.S_n} both have dimension $n,$
note first, for $n\geq 3,$ the inequalities
\begin{equation}\begin{minipage}[c]{35pc}\label{d.Sn2nn}
$\dim(S_n)\ \leq\ \dim(2^n)\ \leq\ n,$
\end{minipage}\end{equation}
which are clear in view of the representation of $S_n$ as a subposet of
$\2^n,$ and of $\2^n$ as a direct product of $n$ chains.
So to get the desired equalities
when $n\geq 3,$ it will suffice to show that the ordering on $S_n$ is
not an intersection of fewer than $n$ total orderings.

To see this, consider any family of total orderings of $|S_n|$
whose intersection is the ordering of $S_n.$
Note that for each $i\in n,$ since $a_i\nleq b_i$ in $S_n,$
that family must have at least one member
$\leq_i$ such that $a_i>_i b_i.$
I claim these orderings $\leq_i$ must be distinct.
Indeed, if for some $i\neq j,$
$\leq_i$ is the same as $\leq_j,$ let us
call their common value $\leq_{i,j}\!.$
Interchanging the roles of $i$ and $j$ if necessary, we may
assume $a_j>_{i,j} a_i.$
Then $a_j >_{i,j} a_i >_{i,j} b_i,$ hence under the intersection
of our family of orderings we have
$a_j\nleq b_i,$ contradicting the definition of $S_n.$
So the orderings $\leq_i$ $(i\in n)$ in our family are indeed distinct,
so $\dim(S_n)\geq n,$ so by~\eqref{d.Sn2nn}, both~\eqref{d.2^n}
and~\eqref{d.S_n} are $\!n\!$-dimensional.

As for the cases where $n<3:$ if $n=2,$ since neither
poset is a chain, both $\dim(S_2)$ and $\dim(\2^2)$ are $>1,$
and the latter is $\leq 2$ by definition.
With the help of the embedding of $S_2$ in $\mathbf{3}^2$ as
$\{(0,1), (1,0), (2,0), (0,2)\},$ we see that its
dimension is also $\leq 2;$ so both are indeed equal to~$2.$
For $n=1,$ $S_1$ is undefined, and $\dim(\2^1)=1$ is clear.

For another class of examples, recall
that a poset $P$ in which every pair of distinct
elements is incomparable is called an {\em antichain}.
We observe that
\begin{equation}\begin{minipage}[c]{35pc}\label{d.dim_antichain}
Every antichain $P$ with more than one element has dimension~$2.$
\end{minipage}\end{equation}
Indeed, if one chooses any total ordering of $|P|,$ then the
intersection of that ordering and the opposite ordering is
the antichain ordering, so $\dim(P)\leq 2;$
and since $|P|$ has more than one
element, no single linear ordering makes it an antichain.

We remark that under a bijective isotone map of posets which is
{\em not} an isomorphism,
the dimension may increase, decrease, or remain unchanged.
For instance, starting with an antichain of $8$ points,
a bijection onto the poset $\2^3$ is an isotone map
that increases the dimension
from $2$ to $3,$ while a linearization decreases the latter dimension
from $3$ to $1.$
On the other hand, a bijection from a $\!4\!$-element antichain to
the poset $\2^2$ is isotone and leaves the dimension, $2,$ unchanged.

Let us note one more elementary poset construction
(a generalization of the one we applied to chains
in proving the equivalence of the two conditions
in Definition~\ref{D.dim}).
Given a poset $P,$ and for each $p\in P$ a poset $Q_p,$
we may define the set
\begin{equation}\begin{minipage}[c]{35pc}\label{d.sum_set}
$|\sum_P Q_p| \ = \ \{(p,q)~|~p\in P,~q\in Q_p\},$
\end{minipage}\end{equation}
and partially order this set lexicographically; that is, by defining
\begin{equation}\begin{minipage}[c]{35pc}\label{d.sum_ord}
$(p,q)~\leq~(p',q')$ if and only if either $p<_P p',$
or $p=p'$ and $q\leq_{Q_p} q'.$
\end{minipage}\end{equation}
It is not hard to verify that
\begin{equation}\begin{minipage}[c]{35pc}\label{d.dim_sum}
$\dim(\sum_P Q_p)$ is the supremum of
$\{\dim(P)\} \cup \{\dim(Q_p)\,|\,p\in P\}.$
\end{minipage}\end{equation}

In the verification, one takes a representation
of $\leq_P$ as in Definition~\ref{D.dim}(i), and
representations of the posets $Q_p$ as in Definition~\ref{D.dim}(ii)
(which may, but need not, have the more restricted
form of condition~(i)).
By repeating mappings if necessary, one can assume that the number of
chains used in each of these representations is the supremum
indicated above.
One can then easily combine these (as in the paragraph
following Definition~\ref{D.dim}) to get an embedding
of $\sum_P Q_p$ in a product of that number of chains.
This gives ``$\leq$'' in~\eqref{d.dim_sum};
``$\geq$'' is straightforward.

\section{Our main result, and some consequences}\label{S.main}

To lead up to our main result, let us first note
that for any two posets $P$ and $Q$ (which, we recall,
are required to be nonempty), we clearly have
\begin{equation}\begin{minipage}[c]{35pc}\label{d.sup_<dim_<sum}
$\r{max}(\dim(P),\,\dim(Q))\ \leq\ \dim(P\times Q)\ %
\leq\ \dim(P)+\dim(Q).$
\end{minipage}\end{equation}

Though the properties of vector-space dimension
suggest that the second inequality
should be equality, the two concepts of dimension
are not alike in that respect.
To see an interesting
way that equality can fail, consider a poset $P$ of
dimension $d,$ represented as a subposet of a
product of $d$ chains, $P\subseteq \prod_{i\in d} T_i,$
and consider a nontrivial chain $C.$
Suppose that for each $i$ we let $T^*_i = T_i\ltimes C,$
the product-set ordered
lexicographically -- intuitively, the chain gotten from $T_i$ by
replacing each element $t$ by a miniature copy of $C.$
It is easy to see that the map $f:P\times C\to\prod_{i\in d} T^*_i$
taking $((p_0,\dots,p_{d-1}),c)$ to $((p_0,c),\dots,(p_{d-1},c))$
is one-to-one and isotone.
Can it fail to be an embedding?

In general, yes.
If $p< p'$ in $P$ and $c> c'$ in $C,$ then in the
product poset $P\times C,$
the elements $(p,c)$ and $(p',c')$ are by definition incomparable;
but if it happens that for all $i,$ we have the strict
inequality $p_i < p'_i,$
then under the ordering described above, $f(p,c)< f(p',c');$
so their images are not incomparable.

However, {\em if} the subposet $P\subseteq \prod_{i\in d} T_i$
has the property that whenever two elements $p,\,p'$ satisfy
$p<p',$ then as members of $\prod_{i\in d} T_i$ they
{\em agree} in at least one coordinate,
then looking at that coordinate, we see that we do get
incomparability between the indicated elements of $f(P\times C),$
and it is easy to check that $f$ is an embedding; so
we indeed have $\dim(P\times C)=d = \dim(P).$

The condition that every pair of comparable elements
of $P\subseteq \prod_{i\in d} T_i$ agree
in at least one coordinate may seem unnatural, but
for $d\geq 3$ it is easy to see that it holds in the poset $S_d,$
regarded as a subposet of $\2^d$ (sentence
following~\eqref{d.S_n}).

In fact, more is true in that example: Every pair of comparable
elements $a_i<b_j$ agrees in one coordinate where their common
value is $0$ (the $\!j\!$-th)
and one where their common value is $1$ (the $\!i\!$-th);
and the above construction can be adapted to show, as a
consequence, that taking the product of $S_d$ with {\em two}
chains does not increase its dimension.

The next result gives this argument in detail for a
more general situation, that applies to products
of a poset $P$ with possibly more than two chains.
Note, however, that in that result, $d$ is not assumed to
be the dimension of $P$ (as was the case in the
above example), but simply to be an integer such that
$P$ is a subposet of a product of $d$ chains with certain properties.
(Some tools similar to this result appear in \mbox{\cite[\S3]{F+T+W}.)}

\begin{theorem}\label{T.main}
Let $d$ and $n$ be nonnegative integers, $(T_i)_{i\in d}$ a
$\!d\!$-tuple of chains, which for notational convenience we will
assume pairwise disjoint and $P$ a subposet of $\prod_{i\in d} T_i.$
Suppose there exists an $\!n\!$-tuple $(M_j)_{j\in n}$ of
pairwise disjoint subsets of $\bigcup_{i\in d} T_i$ such that
\begin{equation}\begin{minipage}[c]{35pc}\label{d.eq_coord}
For every comparable pair of elements $p\leq p'$
in $P,$ and every $j\in n,$ there exists $i\in d$ such
that the $\!i\!$-th coordinates of $p$ and $p'$ are equal,
and their common value is a member of $M_j.$
\end{minipage}\end{equation}

Then for any $\!n\!$-tuple of chains $(C_j)_{j\in n}$ we have
\begin{equation}\begin{minipage}[c]{35pc}\label{d.main}
$\dim(P\times\prod_{j\in n} C_j)\ \leq\ d.$
\end{minipage}\end{equation}
\end{theorem}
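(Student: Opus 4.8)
The plan is to generalize the construction sketched in the paragraphs preceding the theorem. For each $i\in d$ I will build an enlarged chain $T_i^*$: replace each element $t\in T_i$ that lies in some $M_j$ (there is at most one such $j,$ since the $M_j$ are pairwise disjoint) by a copy $\{t\}\times C_j$ of $C_j,$ and leave every other $t\in T_i$ as a single point; order the result lexicographically. In the notation of~\eqref{d.sum_set}--\eqref{d.sum_ord}, this is $T_i^* = \sum_{t\in T_i} Q_{i,t},$ where $Q_{i,t}=C_j$ when $t\in M_j$ and $Q_{i,t}$ is a one-element poset $\{*\}$ otherwise; by~\eqref{d.dim_sum} each $T_i^*$ is again a chain. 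I then define $f:P\times\prod_{j\in n} C_j\to\prod_{i\in d} T_i^*$ by letting the $\!i\!$-th coordinate of $f(p,(c_j)_{j\in n})$ be $(p_i,c_j)\in\{p_i\}\times C_j$ when $p_i\in M_j,$ and $(p_i,*)$ otherwise. Disjointness of the $M_j$ makes this well defined. Once $f$ is shown to be an embedding,~\eqref{d.main} is immediate from Definition~\ref{D.dim}(ii), as $\prod_{i\in d} T_i^*$ is a product of $d$ chains.

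Next I would check that $f$ is isotone. Suppose $(p,(c_j))\leq(p',(c'_j)),$ i.e.\ $p\leq p'$ in $P$ and $c_j\leq c'_j$ in $C_j$ for every $j.$ Fix $i$ and compare the $\!i\!$-th coordinates in $T_i^*.$ If $p_i<_{T_i} p'_i,$ the lexicographic order of $T_i^*$ already gives strict inequality, regardless of the second components; and if $p_i=p'_i,$ then $Q_{i,p_i}=Q_{i,p'_i}$ and the second components are either $c_j\leq c'_j$ (when $p_i\in M_j$) or equal. Either way $f(p,(c_j))\leq f(p',(c'_j)).$

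The substantive step, and the one that genuinely uses hypothesis~\eqref{d.eq_coord}, is the embedding condition~\eqref{d.embdg}: if $(p,(c_j))\nleq(p',(c'_j)),$ then $f(p,(c_j))\nleq f(p',(c'_j)).$ Here I split into two cases. If $p\nleq p'$ in $P,$ then since $P\subseteq\prod_{i\in d} T_i$ and each $T_i$ is a chain, there is an $i$ with $p_i>_{T_i} p'_i,$ and lexicographically the $\!i\!$-th coordinate of $f(p,(c_j))$ is then strictly greater than that of $f(p',(c'_j))$ in $T_i^*,$ which forces incomparability of the images. Otherwise $p\leq p'$ but $c_j\nleq c'_j,$ hence $c_j>_{C_j} c'_j,$ for some $j\in n;$ now~\eqref{d.eq_coord} supplies an $i\in d$ with $p_i=p'_i\in M_j,$ so both $\!i\!$-th coordinates of the images lie in the copy $\{p_i\}\times C_j$ with second components $c_j>c'_j,$ again a strict inequality ``the wrong way''. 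In either case $f(p,(c_j))\nleq f(p',(c'_j)),$ so $f$ is an embedding, and the theorem follows.

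I expect the only real obstacle to be writing the chains $T_i^*$ and the map $f$ down cleanly; once that is done, the isotonicity and embedding checks are short, and the point of~\eqref{d.eq_coord} is precisely that incomparabilities of the ``second kind'' (a reversal in some $C_j$) are detected in a suitable coordinate. Two degenerate cases merit a remark. When $n=0,$ the hypothesis is vacuous and $P\times\prod_{j\in 0} C_j$ is just a copy of $P\subseteq\prod_{i\in d} T_i,$ so~\eqref{d.main} holds trivially; and when $d=0,$ all the $M_j$ are empty, so~\eqref{d.eq_coord} can hold only if $n=0,$ reducing to the previous case.
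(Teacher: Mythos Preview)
Your proof is correct and follows essentially the same route as the paper's: build enlarged chains by inserting copies of the $C_j$ at the appropriate points of each $T_i,$ define the coordinatewise map, and verify isotonicity and~\eqref{d.embdg} by the same two-case split. The only difference is cosmetic: the paper first reduces to the case where all $C_j$ equal a common chain $C$ and the $M_j$ partition $\bigcup_i T_i$ (so every $t$ gets replaced by a full copy of $C$), whereas you work directly with the original data via the lexicographic sum $\sum_{t\in T_i} Q_{i,t}.$
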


\begin{proof}
For notational simplicity, we may assume all the $C_j$ are equal
to a common chain $C,$ and that
$\bigcup_{j\in n} M_j = \bigcup_{i\in d} T_i.$
Indeed, given structures as in the statement of the theorem,
we may embed all the $C_j$ in a common chain $C,$ and if we
prove~\eqref{d.main} with the $C_j$ all replaced by $C,$
this will imply the same inequality for the original
choices of $C_j.$
Likewise, if we enlarge some of the $M_j$
(still keeping them disjoint) so that their
union becomes the whole set $\bigcup_{i\in d} T_i,$
then the hypothesis about pairs of elements $p<p'$ assumed
for the original choices of $M_j$ remains true.
So let us make those assumptions.

To obtain~\eqref{d.main}, we need an embedding of the poset
on the left-hand side, which we can now write
$P\times C^n,$ in a product of $d$ chains $T'_i$ $(i\in d).$
For this purpose we define
\begin{equation}\begin{minipage}[c]{35pc}\label{d.T'}
$T'_i\ =\ T_i\ltimes C,$ the set-theoretic product
of $T_i$ and $C,$ ordered lexicographically, i.e., so that
\end{minipage}\end{equation}
\begin{equation}\begin{minipage}[c]{35pc}\label{d.lex}
$(t,c)\ \leq\ (t',c')$ if and only if
$t<t',$ or $t=t'$ and $c\leq c'.$
\end{minipage}\end{equation}
Also, since the $M_j$ $(j\in n)$ partition
$\bigcup_{i\in d} T_i,$ we can set the notation
\begin{equation}\begin{minipage}[c]{35pc}\label{d.mu}
For each $t\in\bigcup_{i\in d} T_i,$ $m(t)\in n$ will denote the
unique value such that $t\in M_{m(t)}.$
\end{minipage}\end{equation}

We now define a map $f: P\times C^n \to \prod_{i\in d} T'_i$ as follows.
\begin{equation}\begin{minipage}[c]{35pc}\label{d.f}
For $p=(p_i)_{i\in d}\in P \subseteq\prod_{i\in d} T_i,$
and $c=(c_j)_{j\in n}\in C^n,$ let
$f(p,c)$ be the element of $\prod_{i\in d} T'_i$ whose
$\!i\!$-th coordinate is $(p_i,c_{m(p_i)})$ for each $i\in d.$
\end{minipage}\end{equation}
We wish to show that $f$ is an embedding of posets.

First, $f$ is isotone, i.e.,
\begin{equation}\begin{minipage}[c]{35pc}\label{d.f_isotone}
If $(p,c) \leq (p',c')$ in $P\times C^n,$ then
$f(p,c) \leq f(p',c')$ in $\prod_{i\in d} T'_i.$
\end{minipage}\end{equation}
This follows easily from~\eqref{d.lex} and~\eqref{d.f}.

To complete the proof that $f$ is an embedding
(and in particular, is one-to-one), we need to show that
\begin{equation}\begin{minipage}[c]{35pc}\label{d.notleq}
If $(p,c)\,\nleq\,(p',c'),$ \,then\, $f(p,c)\,\nleq\,f(p',c').$
\end{minipage}\end{equation}

This breaks down into two cases.
Suppose first that
\begin{equation}\begin{minipage}[c]{35pc}\label{d.p_notleq}
$p\ \nleq\ p'.$
\end{minipage}\end{equation}

In that case, for some $i$ we have $p_i\nleq p'_i,$
i.e., $p_i> p'_i,$ and
looking at the $\!i\!$-th coordinates of $f(p,c)$ and $f(p',c'),$
namely $(p_i,c_{m(p_i)})$ and $(p'_i,c'_{m(p'_i)}),$
we see from the lexicographic ordering of $T'_i$
that the former is $>$ the latter,
giving the conclusion of~\eqref{d.notleq}.

If, on the other hand,
\begin{equation}\begin{minipage}[c]{35pc}\label{d.p_leq}
$p\,\leq\,p',$ \,but\, $c\,\nleq\,c',$
\end{minipage}\end{equation}
then let us choose a $j\in n$ such that
\begin{equation}\begin{minipage}[c]{35pc}\label{d.c_j_notleq}
$c_j\,>\,c'_j.$
\end{minipage}\end{equation}
Now by the {\em first} condition of~\eqref{d.p_leq} and
the hypothesis~\eqref{d.eq_coord}, there is some $i\in d$ such that
\begin{equation}\begin{minipage}[c]{35pc}\label{d.use_eq_coord}
$p_i\,=\,p'_i\in M_j.$
\end{minipage}\end{equation}
Note that by~\eqref{d.f},
\begin{equation}\begin{minipage}[c]{35pc}\label{d.ith_coords}
$f(p,c)$ and $f(p',c')$ have $\!i\!$-th terms
$(p_i,c_j)$ and $(p'_i,c'_j)$ respectively.
\end{minipage}\end{equation}

Since the $\!T_i\!$-coordinates of the above two $\!i\!$-th terms
are the same by~\eqref{d.use_eq_coord}, the order-relation
between them is
that of the $\!C\!$-coordinates, which satisfy~\eqref{d.c_j_notleq}.
This completes the proof of~\eqref{d.notleq}, and of the Theorem.
\end{proof}

I came up with the above result after pondering~\cite{S3x2x2},
which showed by an explicit construction that
$\dim(S_3\times \2\times \2) = 3,$ i.e., is the same as $\dim(S_3).$
The next corollary includes that case.

\begin{corollary}\label{C.0&1}
Suppose $d\geq 2$ and $(T_i)_{i\in d}$ is a family of non-singleton
chains, each having a least element $0_i$ and a greatest element $1_i,$
and $P$ is a subposet of $\prod_{i\in d} T_i$ consisting of
elements each of which has at least one coordinate of
the form $0_i$ and at least one of the form $1_{i'}.$
Then for any two chains $C_0$ and $C_1,$ we have
\begin{equation}\begin{minipage}[c]{35pc}\label{d.dimPxC0xC1}
$\dim(P\times C_0\times C_1)\ \leq\ d.$
\end{minipage}\end{equation}

In particular, if $d\geq 3$
and $P$ is any subposet of $\2^d\setminus \{0,1\}$
containing the standard poset $S_d$ \textup{(}e.g., if $P=S_d),$
then for any two chains $C_0$ and $C_1,$
\begin{equation}\begin{minipage}[c]{35pc}\label{d.dim=d=dim}
$\dim(P\times C_0\times C_1)\ =\ d\ =\ \dim(P).$
\end{minipage}\end{equation}
\end{corollary}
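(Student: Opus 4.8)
The plan is to derive Corollary~\ref{C.0&1} as a direct application of Theorem~\ref{T.main} with $n=2$. First I would set up the data the theorem requires. For the hypotheses of Theorem~\ref{T.main} I take the same $d$ and the same chains $(T_i)_{i\in d}$, the same subposet $P\subseteq\prod_{i\in d}T_i$, and $n=2$. The two disjoint subsets $M_0,M_1\subseteq\bigcup_{i\in d}T_i$ I would define by collecting all the bottom elements and all the top elements: $M_0=\{0_i\mid i\in d\}$ and $M_1=\{1_i\mid i\in d\}$. These are disjoint because each $T_i$ is a nontrivial chain (it has a least and a greatest element, and if these coincided for some $i$ that $T_i$ would be a singleton, which would force $P$ to have no coordinate that is simultaneously of the form $0_i$ and distinct from some $1_{i'}$ — actually one should note that the hypothesis ``at least one coordinate of the form $0_i$ and at least one of the form $1_{i'}$'' with $i\ne i'$, or even allowing $i=i'$, already forces the relevant chains to be nontrivial; I would spell out this small point).

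Next I would verify condition~\eqref{d.eq_coord} for this choice. Let $p\le p'$ in $P$ and let $j\in\{0,1\}$. If $j=1$, the hypothesis on $P$ gives some coordinate $i'$ with $p'_{i'}=1_{i'}$; since $p\le p'$ forces $p_{i'}\le 1_{i'}$, but actually I need $p_{i'}=1_{i'}$ too. Here is where the argument needs care: knowing $p'$ has a top coordinate does not immediately tell me $p$ does there. The correct move is: apply the hypothesis to $p$ to get a coordinate $i$ with $p_i=1_i$; then $p\le p'$ gives $1_i=p_i\le p'_i\le 1_i$, so $p'_i=1_i$ as well, and the common value $1_i$ lies in $M_1$. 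Symmetrically, applying the hypothesis to $p'$ yields a coordinate $i$ with $p'_i=0_i$, whence $p_i=0_i$ by $p\le p'$, and the common value lies in $M_0$. So for $j=1$ I use ``$p$ has a top coordinate'' and for $j=0$ I use ``$p'$ has a bottom coordinate'' — this asymmetric bookkeeping is the one genuinely non-automatic step, and I would present it explicitly.

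With~\eqref{d.eq_coord} established, Theorem~\ref{T.main} applied with $n=2$ and the two chains $C_0,C_1$ immediately gives $\dim(P\times C_0\times C_1)\le d$, which is~\eqref{d.dimPxC0xC1}. For the second assertion, suppose $d\ge 3$ and $S_d\subseteq P\subseteq 2^d\setminus\{0,1\}$. The chains here are all $T_i=2=\{0<1\}$, with $0_i=0$ and $1_i=1$; every element of $2^d\setminus\{0,1\}$ is an $n$-tuple of $0$s and $1$s that is neither all-$0$ nor all-$1$, hence has at least one coordinate equal to $0$ and at least one equal to $1$, so $P$ satisfies the hypothesis of the first part and~\eqref{d.dimPxC0xC1} gives $\dim(P\times C_0\times C_1)\le d$. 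In the other direction, $S_d\subseteq P\subseteq 2^d$ together with the monotonicity of dimension under subposets and~\eqref{d.Sn2nn}–\eqref{d.S_n} (which established $\dim(S_d)=d=\dim(2^d)$) gives $d=\dim(S_d)\le\dim(P)\le\dim(2^d)=d$, so $\dim(P)=d$; and then the left inequality of~\eqref{d.sup_<dim_<sum} gives $\dim(P)\le\dim(P\times C_0\times C_1)$. Chaining these, $d=\dim(P)\le\dim(P\times C_0\times C_1)\le d$, which forces equality throughout and yields~\eqref{d.dim=d=dim}. I do not anticipate any serious obstacle; the only place demanding attention is the asymmetric use of the two halves of the hypothesis on $P$ when checking~\eqref{d.eq_coord}, plus the trivial remark that the chains involved are forced to be nontrivial.
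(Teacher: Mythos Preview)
Your proposal is correct and follows essentially the same route as the paper: you apply Theorem~\ref{T.main} with $n=2$ and $M_0=\{0_i\mid i\in d\}$, $M_1=\{1_i\mid i\in d\}$, verifying~\eqref{d.eq_coord} via the same asymmetric observation (use the $1$-coordinate of the smaller element and the $0$-coordinate of the larger one), and then handle the second assertion by noting that elements of $2^d\setminus\{0,1\}$ automatically satisfy the hypothesis while $S_d\subseteq P$ forces $\dim(P)=d$. Your extra remark about disjointness of $M_0$ and $M_1$ when some $T_i$ might be a singleton is a harmless aside the paper does not bother with; it can be disposed of by projecting away any trivial factors, which does not change $P$ as a poset and only decreases $d$.
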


\begin{proof}
In the context of the first assertion,
given $p\leq p'\in P,$ since $p$ has
at least one coordinate of the form $1_i,$
$p'\geq p$ must also
have $1_i$ as its $\!i\!$-th coordinate; and similarly,
since $p'$ has a coordinate $0_{i'},$ $p$ must agree with
$p'$ in that coordinate.
Applying Theorem~\ref{T.main} with $n=2,$ $M_0=\{0_i\,|\,i\in d\}$
and $M_1=\{1_i\,|\,i\in d\},$ we get~\eqref{d.dimPxC0xC1}.

The second statement follows because when each $T_i$ is
the $\!2\!$-element set $\{0_i,\,1_i\},$ the exclusion
of $0$ and $1$ from $P$ forces every element
to have at least one coordinate of
the form $1_i$ and one of the form $0_{i'};$
and the assumption that $P$ contains $S_d,$ which
we saw following~\eqref{d.S_n} has dimension $d,$ turns the
inequality~\eqref{d.dimPxC0xC1} into equality.
\end{proof}

If, instead of assuming that every element
has at least one  $\!0\!$-coordinate and
at least one  $\!1\!$-coordinate, we only assume one of these
conditions, it is easy to check that the analogous reasoning
gives a conclusion half as strong:

\begin{corollary}\label{C.0}
Suppose $d\geq 1,$ $(T_i)_{i\in d}$ is a family of chains each
having a least element, $0_i,$
and $P$ is a subposet of $\prod_{i\in d} T_i$ consisting of
elements each of which has at least one coordinate of the form $0_i.$
Then for any chain $C$ we have
\begin{equation}\begin{minipage}[c]{35pc}\label{d.dimPxC}
$\dim(P\times C)\ \leq\ d.$
\end{minipage}\end{equation}

In particular, if $P$ is any subposet of $\2^d\setminus \{1\}$
containing $S_d$ \textup{(}e.g., if $P=S_d\cup\{0\})$
then for any chain $C$ we have
\begin{equation}\begin{minipage}[c]{35pc}\label{d.dimPxC=}
$\dim(P\times C)\ =\ d\ =\ \dim(P).$
\end{minipage}\end{equation}

The analogous statements hold with least elements $0_i$ and $0$
everywhere replaced by greatest elements $1_i$ and $1.$\qed
\end{corollary}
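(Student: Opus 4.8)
\medskip
\noindent\textbf{Proof proposal for Corollary~\ref{C.0}.}
The plan is to derive the first assertion, its ``in particular'' refinement, and the dual statement all from Theorem~\ref{T.main}, mimicking the proof of Corollary~\ref{C.0&1} but with $n=1$ in place of $n=2$. For the first assertion I would take the single chain $C_0=C$ and the single set $M_0=\{0_i\mid i\in d\}\subseteq\bigcup_{i\in d}T_i$. The one point to verify is hypothesis~\eqref{d.eq_coord}: given $p\leq p'$ in $P$ and the lone index $j=0$, use that $p'$ has some coordinate of the form $0_{i'}$; then $p\leq p'$ forces $p_{i'}\leq p'_{i'}=0_{i'}$, and since $0_{i'}$ is least in $T_{i'}$ this gives $p_{i'}=p'_{i'}=0_{i'}\in M_0$, as~\eqref{d.eq_coord} requires. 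Theorem~\ref{T.main} then delivers $\dim(P\times C)\leq d$, i.e.~\eqref{d.dimPxC}.

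For the ``in particular'' clause (where, for $S_d$ to be a subposet of $2^d$, one takes $d\geq 3$, as in the discussion after~\eqref{d.S_n}), specialize each $T_i$ to $\{0_i,1_i\}$: an element of $2^d\setminus\{1\}$ has at least one coordinate equal to some $0_i$ (otherwise it would equal $1$), so the first assertion applies and gives $\dim(P\times C)\leq d$. Conversely, $S_d\subseteq P$, together with monotonicity of dimension under passage to subposets and the left-hand inequality of~\eqref{d.sup_<dim_<sum}, gives $\dim(P\times C)\geq\dim(P)\geq\dim(S_d)=d$, while $P\subseteq 2^d$ gives $\dim(P)\leq\dim(2^d)=d$. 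Combining, $\dim(P\times C)=d=\dim(P)$, which is~\eqref{d.dimPxC=}.

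For the version with $0_i,0$ replaced everywhere by $1_i,1$ I would dualize: the dimension of a poset is unchanged under reversal of all its order relations, $(P\times C)^{\r{op}}=P^{\r{op}}\times C^{\r{op}}$, and $P^{\r{op}}$ is a subposet of $\prod_{i\in d}T_i^{\r{op}}$ each of whose elements has a coordinate equal to the least element of the relevant $T_i^{\r{op}}$ (namely the old $1_i$); so the first assertion, applied to $P^{\r{op}}$, does the job. Equivalently one reruns the argument directly with $M_0=\{1_i\mid i\in d\}$, now using that the $1$-coordinate of the \emph{smaller} element $p$ (rather than a $0$-coordinate of the larger element $p'$) forces the coordinate-agreement. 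I do not expect a genuine obstacle: the only thing to be careful about is the bookkeeping --- with only one of the two extreme values guaranteed, Theorem~\ref{T.main} can be fed only a single set $M_0$, hence only one extra chain, and one must correctly identify which element's extreme coordinate supplies the needed agreement.
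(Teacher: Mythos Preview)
Your proposal is correct and follows exactly the approach the paper intends: the paper leaves this corollary with a bare \qed, having remarked just before it that ``the analogous reasoning'' to Corollary~\ref{C.0&1} ``gives a conclusion half as strong,'' and your argument---applying Theorem~\ref{T.main} with $n=1$ and $M_0=\{0_i\mid i\in d\}$, then specializing and dualizing---is precisely that analogous reasoning spelled out. Your identification of which element's extreme coordinate forces the agreement (the $0$-coordinate of the larger element $p'$, respectively the $1$-coordinate of the smaller element $p$ in the dual case) matches the corresponding step in the paper's proof of Corollary~\ref{C.0&1}.
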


To get examples of Theorem~\ref{T.main}
with $n>2,$ we shall use a different way of choosing $n$ subsets
$M_j$ of $\bigcup_{i\in d} T_i,$ based on the subscript $i$ rather
than the distinction between greatest and least elements of~$T_i.$
We will need a bit of notation.
\begin{equation}\begin{minipage}[c]{35pc}\label{d.PdA}
For every positive integer $d$ and every subset
$A\subseteq d\!+\!1=\{0,\dots,d\},$
$B_{d,A}$ will denote the subset of $\2^d$ consisting
of those elements in which the number of coordinates
of the form $1_i$ is a member of~$A.$
\end{minipage}\end{equation}
For instance, when $d\geq 3,$ $S_d\cong B_{d,\,\{1,\,d-1\}}.$

Then we have

\begin{corollary}\label{C.Pa,a+1}
For every positive integer $d,$ every integer $a$ with
$0\leq a\leq d-1,$ and every family of chains $(C_j)_{j\in n}$ with
\begin{equation}\begin{minipage}[c]{35pc}\label{d.2n<d}
$n\ \leq\ d/2,$
\end{minipage}\end{equation}
we have, in the notation of~\eqref{d.PdA},
\begin{equation}\begin{minipage}[c]{35pc}\label{d.dimPdaa+1}
$\dim(B_{d,\,\{a,\,a+1\}}\times\prod_{j\in n} C_j)\ \leq\ d.$
\end{minipage}\end{equation}
\end{corollary}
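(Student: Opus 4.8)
The plan is to deduce this from Theorem~\ref{T.main}. First I would take each $T_i$ $(i\in d)$ to be a relabeled copy $\{0_i,1_i\}$ of the chain $2,$ so that $P_d^{a,a+1}$ — which is nonempty since $0\leq a<a+1\leq d$ — becomes a subposet of $\prod_{i\in d}T_i.$ The only thing needing thought is the choice of the partitioning sets $(M_j)_{j\in n}.$ Here, in contrast to Corollaries~\ref{C.0&1} and~\ref{C.0}, where one separates the $\!0\!$-coordinates from the $\!1\!$-coordinates, it pays to keep $0_i$ and $1_i$ together: since $n\leq d/2,$ i.e.\ $2n\leq d,$ I would choose pairwise disjoint two-element subsets $I_0,\dots,I_{n-1}$ of $d$ and put $M_j=\{0_i,1_i~|~i\in I_j\}$ for each $j\in n.$ These are pairwise disjoint subsets of $\bigcup_{i\in d}T_i,$ so~\eqref{d.dimPdaa+1} will follow from~\eqref{d.main} once we verify hypothesis~\eqref{d.eq_coord}.

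To verify~\eqref{d.eq_coord}, I would fix a comparable pair $p\leq p'$ in $P_d^{a,a+1}$ and an index $j\in n,$ and exhibit a coordinate $i\in I_j$ in which $p$ and $p'$ agree, with common value in $M_j.$ The crucial observation is that $p$ and $p'$ agree in all but at most one coordinate. This is clear if $p=p';$ and if $p<p',$ then, since $p\leq p'$ in $2^d,$ the set $\{i\in d~|~p_i=1_i\}$ is properly contained in $\{i\in d~|~p'_i=1_i\},$ while both of these sets have $a$ or $a+1$ elements, so the former has exactly $a$ elements, the latter exactly $a+1,$ and $p$ and $p'$ differ in just one coordinate, call it $i_0.$ Since $|I_j|=2,$ we may therefore pick $i\in I_j$ with $i\neq i_0$ (if $p=p',$ any $i\in I_j$ serves); then $p$ and $p'$ agree in the $\!i\!$-th coordinate, and their common value there, being $0_i$ or $1_i,$ lies in $M_j.$ This establishes~\eqref{d.eq_coord}, and the corollary follows from Theorem~\ref{T.main}.

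I do not foresee any real obstacle; the single ingredient with any content is the idea of keeping $0_i$ and $1_i$ in the same $M_j,$ together with the bookkeeping that makes $n\leq d/2$ exactly the needed hypothesis: each $M_j$ must ``cover'' two coordinates so that at least one of them is forced to be a coordinate of agreement, and $n$ disjoint pairs fit into the $d$ available coordinates precisely when $2n\leq d.$ (Running the same argument for $P_d^{a,b}$ in place of $P_d^{a,a+1},$ where a comparable pair can disagree in up to $b-a$ coordinates, would force $|I_j|=b-a+1$ and hence the weaker bound $n\leq d/(b-a+1);$ this seems worth a remark but not worth pursuing here.)
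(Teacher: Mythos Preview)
Your proof is correct and follows essentially the same route as the paper's: the paper also applies Theorem~\ref{T.main} with $M_j=\{0_{2j},1_{2j},0_{2j+1},1_{2j+1}\}$ (your construction with the specific choice $I_j=\{2j,2j+1\}$), and uses the same key observation that comparable elements of $P_d^{a,a+1}$ disagree in at most one coordinate. Your closing remark on $P_d^{a,b}$ is a nice addition not in the paper.
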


\begin{proof}
Again writing $\2^d$ as $\prod_{i\in d}\{0_i,1_i\},$ let us define
$n$ $\!4\!$-element subsets of $\bigcup_{i\in d}\,\{0_i,1_i\}$ by
\begin{equation}\begin{minipage}[c]{35pc}\label{d.M_j}
$M_j\ =\ \{0_{2j},\,1_{2j},\,0_{2j+1},\,1_{2j+1}\},$ $(j=0,\dots,n-1).$
\end{minipage}\end{equation}
Condition~\eqref{d.2n<d} guarantees that this formula indeed
describes subsets of $\bigcup_{i\in d}\{0_i,1_i\}.$

Now if $p\leq p'$ are elements of $B_{d,\,\{a,\,a+1\}},$ then the
cardinalities of the subsets of $d$ on which $p$ and $p'$ assume
values of the form $1_i$ (namely, $a$ or $a+1),$
differ by at most $1,$ hence the order relation between
them necessitates that they disagree at most one coordinate.
Since each $M_j$ contains both $0_i$ and $1_i$ for
{\em two} values of $i,$ one of those values of $i$ must have
the property that $p$ and $p'$ agree on the $\!i\!$-th coordinate.
Hence these subsets $M_j$ satisfy the hypotheses
of Theorem~\ref{T.main}, completing the proof.
\end{proof}

If the posets $B_{d,\,\{a,\,a+1\}}\subseteq \2^d$ had, like
$S_d,$ dimension $d,$ then the above result would give us,
for every $n,$ finite posets whose dimensions were not changed on
taking a direct product with $n$ chains.
However, such subposets of $\2^d$ generally have dimension less
than $d.$
There are many results in the literature obtaining
bounds on the dimensions of subposets of $\2^d;$
cf.~\cite{BD}, \cite{F+T+W}, \cite{ZF}, \cite{H+K+T},
\cite[Theorem~7.1]{K+T}, \cite{Ko+Ta},
\cite[Theorems~7, 10, 12 and 13]{lin}, and \cite[Theorem~2]{Trotter89}.
Let us obtain one such result here, as another consequence
of~Theorem~\ref{T.main}.
(In the statement and proof, we shall not need to look
at the disjoint union of the factors of
$\2^d,$ hence we shall not, as in the preceding
corollaries, treat these as disjoint sets $\{0_i,1_i\},$
but as the same set $\2=\{0,1\}.)$

\begin{corollary}\label{C.P<>}
Let $d$ be a positive integer, and let
\begin{equation}\begin{minipage}[c]{35pc}\label{d.P=}
$P \ = \ B_{d,\,[2,\,d-2]}
\setminus\{(0,\dots,0,1,1),\ (1,\dots,1,0,0)\}.$
\end{minipage}\end{equation}
\textup{(}Here $B_{d,\,[2,\,d-2]}$ is defined
as in~\eqref{d.PdA}, with $A=[2,\,d\!-\!2]=\{2,3,\dots,d\!-\!2\};$
$(0,\dots,0,1,1)$ denotes the $\!d\!$-tuple whose
first $d\!-\!2$ coordinates are $0$ and whose last two
are $1,$ and $(1,\dots,1,0,0)$ the corresponding
$\!d\!$-tuple with the roles of $0$ and $1$ reversed.\textup{)}
Then
\begin{equation}\begin{minipage}[c]{35pc}\label{d.dimP_<}
$\dim(P)\ \leq\ d-2.$
\end{minipage}\end{equation}

Hence the same upper bound holds for the dimension of
any subposet of $P;$
in particular, for any poset of the form $B_{d,\,\{a,\,b\}}$
with $3\leq a < b \leq d-3.$
\end{corollary}

\begin{proof}
I claim that as a subset of $\2^d,$
\begin{equation}\begin{minipage}[c]{35pc}\label{d.Psubset}
$P \ \subseteq \ B_{d-2,\,[1,\,d-3]}\times \2\times \2;$
\end{minipage}\end{equation}
in other words, that the first $d-2$ coordinates
of every $p\in P$ contain at least one $1$ and at least one $0.$
To see the former condition, note that being contained in
$B_{d,\,[2,\,d-2]},$
$p$ itself must have at least two coordinates $1.$
If none of these were among the first $d-2$ coordinates,
this would force $p=(0,\dots,0,1,1),$ but that element is
excluded in~\eqref{d.P=}.
The second condition follows in the same way.

Now by Corollary~\ref{C.0&1}, with $d-2$ in place of $d,$
and each $T_i$ taken to be $\2,$ we see that the
right-hand side of~\eqref{d.Psubset} has
dimension $\leq d-2,$ hence the same is true of its
subposet $P,$ proving~\eqref{d.dimP_<}

The final sentence of the corollary follows immediately.
\end{proof}

\section{An old result, and questions old and new}\label{S.questions}

Having seen that dimension of posets is {\em not} in
general additive on direct products, it is striking that in an important
class of cases, it is.
The theorem below was proved, for products over index sets of arbitrary
cardinality, in K.~Baker's unpublished undergraduate
thesis~\cite{KB}.
That proof is summarized in~\cite{K+T}
for pairwise products (from which the case of arbitrary
finite products follows), assuming the dimensions finite.
I give below a version of the proof (also for pairwise products of
finite-dimensional posets) which I find easier to follow.

\begin{theorem}\label{T.prod_bdd}
\textup{(\cite[p.9, Property~3]{KB}, \cite[p.179, last 11 lines]{K+T})}
Let $P$ and $Q$ be finite-dimensional posets, each
having a least element $0$ and a greatest element $1.$
Then
\begin{equation}\begin{minipage}[c]{35pc}\label{d.dim=sum}
$\dim(P\,\times\,Q)\ =\ \dim(P)\,+\,\dim(Q).$
\end{minipage}\end{equation}
\end{theorem}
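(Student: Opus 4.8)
The inequality $\dim(P\times Q)\le\dim(P)+\dim(Q)$ is already noted in~\eqref{d.sup_<dim_<sum}, so the plan is to establish the reverse inequality $\dim(P\times Q)\ge\dim(P)+\dim(Q)$ using the bounding elements $0,1$ in each factor. Write $d=\dim(P)$ and $e=\dim(Q)$. Suppose we are given a family $(\leq_\alpha)_{\alpha\in\kappa}$ of linearizations of $\leq_{P\times Q}$ whose intersection is $\leq_{P\times Q}$; I want to show $\kappa\ge d+e$. The idea is to split this family into two subfamilies, one of which ``sees'' the $P$-order finely enough to realize $\dim(P)$, and the other the $Q$-order, so that the two subfamilies have sizes at least $d$ and at least $e$ respectively, and are disjoint.

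The key construction: for each $\alpha$, restrict $\leq_\alpha$ to the subposet $P\times\{1_Q\}\cong P$, getting a linearization $\leq_\alpha^P$ of $\leq_P$; similarly restrict to $\{1_P\}\times Q\cong Q$ to get $\leq_\alpha^Q$. Since $\leq_{P\times Q}$ restricted to $P\times\{1_Q\}$ is just $\leq_P$, the family $(\leq_\alpha^P)_{\alpha}$ has intersection $\leq_P$, and likewise $(\leq_\alpha^Q)_{\alpha}$ has intersection $\leq_Q$. The heart of the argument is to show that no single index $\alpha$ can be ``essential'' on both sides at once — more precisely, that we can partition $\kappa = A\sqcup B$ so that $(\leq_\alpha^P)_{\alpha\in A}$ still has intersection $\leq_P$ and $(\leq_\alpha^Q)_{\alpha\in B}$ still has intersection $\leq_Q$; then $|A|\ge d$ and $|B|\ge e$ give $\kappa=|A|+|B|\ge d+e$. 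To build the partition, consider an incomparable pair $p\not\le p'$ in $P$: there is some $\alpha$ with $p'>_\alpha p$ in $P\times\{1_Q\}$, and I want to argue that using the bottom element $0_Q$, such an $\alpha$ cannot simultaneously be forced to witness incomparabilities in $Q$. The mechanism is the classical one (as in the $S_n$ computation earlier in the paper): if a single linearization $\leq_\alpha$ reverses an incomparability on the $P$-side at height $1_Q$ and also an incomparability on the $Q$-side, one chains these reversals through the elements $(p,0_Q)$, $(p,1_Q)$, $(p',1_Q)$, etc., to manufacture a relation in the intersection that contradicts incomparability in $P\times Q$. So each $\alpha$ is ``allowed'' to be essential on at most one side; assign it accordingly.

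The main obstacle is making the partition step precise: it is not literally true that each individual $\leq_\alpha$ is essential on only one side, so one cannot just sort indices by a local property. The right formulation is a Baker-style argument: take a minimal sub-selection of the $\leq_\alpha^P$ realizing $\dim(P)$ and, disjointly, a minimal sub-selection of the $\leq_\alpha^Q$ realizing $\dim(Q)$, and then show these two index sets must be disjoint. Disjointness is where the $0$'s and $1$'s do the work: if some $\alpha$ lay in both minimal selections, then $\leq_\alpha^P$ is essential for $\leq_P$ (removing it spoils the intersection), so there is an incomparable pair $p\not\le p'$ in $P$ reversed \emph{only} by $\leq_\alpha$ among the $P$-selection; similarly an incomparable pair $q\not\le q'$ in $Q$ reversed only by $\leq_\alpha$ among the $Q$-selection. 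Then in $P\times Q$ one examines the four elements $(p,q)$, $(p,q')$, $(p',q)$, $(p',q')$ together with $(p,1_Q),(p',1_Q),(1_P,q),(1_P,q')$ and the order $\leq_\alpha$, and derives that $(p,q)$ and $(p',q')$ — which are incomparable in $P\times Q$ — nonetheless compare the same way under \emph{every} $\leq_\beta$, contradicting that the full family has intersection $\leq_{P\times Q}$. I would write this last contradiction out carefully, since that is the only genuinely delicate computation; everything else is bookkeeping. Finally, combining $\kappa\ge|A\sqcup B|=\dim(P)+\dim(Q)$ with the upper bound from~\eqref{d.sup_<dim_<sum} yields~\eqref{d.dim=sum}.
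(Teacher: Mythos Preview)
Your overall strategy --- write $\leq_{P\times Q}$ as an intersection of linearizations and split the index set into a ``$P$-side'' and a ``$Q$-side'' of sizes at least $\dim(P)$ and $\dim(Q)$ --- matches the paper's. But the splitting mechanism you propose has a real gap. In your final paragraph you take minimal subfamilies $A$ and $B$ (for $P$ and $Q$ respectively) and argue that they must be disjoint: assuming $\alpha\in A\cap B$, you produce $p\nleq p'$ and $q\nleq q'$ reversed \emph{only by} $\leq_\alpha$ among $A$ (respectively $B$), and then assert that $(p,q)$ and $(p',q')$ must compare the same way under \emph{every} $\leq_\beta$. That step cannot be completed from your hypotheses. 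Minimality of $A$ tells you nothing about how $\leq_\beta$ orders $(p,1_Q),(p',1_Q)$ for $\beta\notin A$, and likewise for $B$; so for indices $\beta\notin A\cup B$ you have no control whatsoever over the relative order of $(p,q)$ and $(p',q')$. The promised contradiction is not just unwritten --- the information needed for it is not there.

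The paper's proof avoids this by using a determinate partition that covers all indices at once: it compares the incomparable pair $(0_P,1_Q)$ and $(1_P,0_Q)$, and sorts each $i$ according to whether $(0_P,1_Q)<_i(1_P,0_Q)$ or the reverse. Say the first case occurs for $i<m$. To see that $p\mapsto(p,0_Q)$ embeds $P$ in the product of the first $m$ chains, one notes that $p\nleq p'$ gives $(p,0_Q)\nleq(p',1_Q)$, hence $(p,0_Q)>_i(p',1_Q)$ for some $i$; but for $i\geq m$ one has $(p,0_Q)\leq_i(1_P,0_Q)<_i(0_P,1_Q)\leq_i(p',1_Q)$, so necessarily $i<m$, and then $(p,0_Q)>_i(p',1_Q)\geq_i(p',0_Q)$. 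The single fixed comparison $(0_P,1_Q)$ versus $(1_P,0_Q)$ is what governs every index simultaneously; that is the idea your argument is missing.
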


\begin{proof}
In view of Definition~\ref{D.dim}(ii), $\leq$ clearly
holds in~\eqref{d.dim=sum}, so it suffices to show $\geq.$
For this we shall use Definition~\ref{D.dim}(i),
and show that if we have an expression for the partial order
of $P\times Q$ as an intersection of
\begin{equation}\begin{minipage}[c]{35pc}\label{d.leqi}
$n$ total orderings, \ $\leq_0,\ \dots,\ \leq_{n-1},$ \ %
on\ $|P\,\times\,Q|,$
\end{minipage}\end{equation}
then we can split that family of orderings into
two disjoint subsets, such that a certain map of $P$ into
the product of chains determined by one of those subsets is
an embedding, as is a map of
$Q$ into the product determined by the other.
Thus the former subset must consist of $\geq\dim(P)$
orderings and the latter of $\geq\dim(Q),$ giving
$n\geq\dim(P)+\dim(Q),$ as desired.

The trick to finding this partition is to look at the
relative order, under each of the orderings~\eqref{d.leqi},
of the elements $(0_P,1_Q)$ and $(1_P,0_Q)$ of $|P\times Q|.$
Reindexing those $n$ orderings if necessary, we may assume that
for some $m\leq n,$
\begin{equation}\begin{minipage}[c]{35pc}\label{d.(0,1)vs(1,0)}
$(0_P,1_Q) <_i (1_P,0_Q)$ \,for\, $0\leq i < m,$ \,while
$(1_P,0_Q) <_i (0_P,1_Q)$ \,for\, $m\leq i < n.$
\end{minipage}\end{equation}
Let us now show that the map
\begin{equation}\begin{minipage}[c]{35pc}\label{d.P->}
$P\to (|P\times Q|,\,\leq_0)\times\dots\times(|P\times Q|,\,\leq_{m-1})$
\ given by \ $p\mapsto ((p,0_Q),\dots,(p,0_Q)),$
\end{minipage}\end{equation}
which is clearly isotone, is an embedding;
i.e., that if $p,\,p'\in P$ satisfy
\begin{equation}\begin{minipage}[c]{35pc}\label{d.nleq}
$p\ \nleq\ p',$
\end{minipage}\end{equation}
then the corresponding condition holds on
the images of $p$ and $p'$ under~\eqref{d.P->}.

To show this, note that under the product ordering on $P\times Q,$
the relation~\eqref{d.nleq} implies that $(p,0_Q)\nleq (p',1_Q).$
Hence we can find some $i$ such that
\begin{equation}\begin{minipage}[c]{35pc}\label{d.nleq_i}
$(p,0_Q)\ \nleq_i\ (p',1_Q).$
\end{minipage}\end{equation}
Now if $i\geq m,$ we would have
$(p,0_Q)\leq_i (1_P,0_Q) <_i (0_P,1_Q) \leq_i (p',1_Q),$
contradicting ~\eqref{d.nleq_i}; so $i< m.$
Since~\eqref{d.nleq_i} implies $(p,0_Q)\nleq_i (p',0_Q),$
this completes the proof that~\eqref{d.P->} respects $\nleq,$
hence is an embedding.
The analogous argument shows that the map
$Q\to {(|P\times Q|,\leq_m)\times\dots\times(|P\times Q|,\leq_{n-1})}$
given by $q\mapsto ((0_P,q),\dots,(0_P,q))$ likewise
gives an embedding of $Q.$
As noted in the first paragraph of this
proof, this establishes~\eqref{d.dim=sum}.
\end{proof}

Though we have seen that~\eqref{d.dim=sum}
does not hold without the assumptions
that $P$ and $Q$ have upper and lower bounds, the deviations
from equality in all known examples are $\leq 2,$
leading to the longstanding open question:

\begin{question}\label{Q.bound_2}
\textup{(i)}\ \ If $P$ and $Q$ are finite-dimensional posets,
must $\dim(P\times Q)\geq \dim(P)+\dim(Q)-2$?

In particular,\\
\textup{(ii)}\ \ If $P$ is a finite-dimensional poset, and $n$
a positive integer such that $\dim(P\times \2^n)\ =\ \dim(P),$
must $n\leq 2$?
\end{question}

In~\cite[last line of p.179 and top line of p.180]{K+T}
it was conjectured that $\dim(P)+\dim(Q)$ can exceed $\dim(P\times Q)$
by at most the number of members
of the set $\{P,\,Q\}$ that do {\em not} have both a
greatest and a least element.
Thus, the case where that number is $0$ is
Theorem~\ref{T.prod_bdd}, while the case where there is no restriction
on $P$ or $Q$ corresponds to Question~\ref{Q.bound_2}(i) above.
The case of that conjecture
where that number is~$1,$ however, turned out to
be false: from Corollary~\ref{C.0&1} we see that
for any $n\geq 3,$ $\dim(S_n\times \2^2)=\dim(S_n),$
so $\dim(S_n)+\dim(\2^2)$ exceeds
$\dim(S_n\times \2^2)=\dim(S_n)$ by $2,$
though $\2^2$ has both greatest and least elements.

However, one might ask about different intermediate cases
between those of Theorem~\ref{T.prod_bdd} and
Question~\ref{Q.bound_2}(i),
suggested by Corollary~\ref{C.0}, which we note as
Question~\ref{Q.bound_1} below.
C.\,Lin poses part~(iii) of that question
in~\cite{lin}, though Theorem~10 and Lemma~11 of that paper
suggest the stronger implications of~(i) and~(ii).

\begin{question}\label{Q.bound_1}
\textup{(i)}~~If $P$ and $Q$ are finite-dimensional posets
such that $P$ has a least element and
$Q$ has a greatest element,
must $\dim(P\times Q)\geq \dim(P)+\dim(Q)-1$?

\textup{(ii)}~~If $P$ and $Q$ are finite-dimensional posets
each of which has a least element,
or each of which has a greatest element,
must $\dim(P\times Q)\geq \dim(P)+\dim(Q)-1$?

And finally, a possible implication weaker than either of the above two:

\textup{(iii)}~~\cite[p.80]{lin}~~If $P$ and $Q$ are
finite-dimensional posets such that $P$ has a least or a
greatest element, and $Q$ has both,
must $\dim(P\times Q)\geq \dim(P)+\dim(Q)-1$?
\end{question}

\cite[Theorem~$3'$]{KR90}$=$\cite[Theorem~10]{F+T+W} shows
that~(ii) has a positive answer (in fact, that equality holds)
in the case where $P$ is an arbitrary finite poset with a least
element, while $Q$ is the $\!3\!$-element poset (in those
papers called $\mathbf{V}$ or $V)$ consisting of a least element and
two mutually incomparable elements above it.
(And, of course, this implies the corresponding
result with all posets turned upside down.)

Returning to Question~\ref{Q.bound_2}, I have wondered whether
in Theorem~\ref{T.prod_bdd}, where no dimensionality
was lost in forming a product of posets, this could be related to the
hypothesis that the sets of minimal and of maximal elements of each
factor were singletons, i.e., had dimension~$0.$
If so, could one use the fact that for arbitrary finite~$P$
and~$Q$ the sets of minimal and maximal
elements are antichains, hence have dimension~$\leq 2,$
to get a positive answer to Question~\ref{Q.bound_2} for
finite posets?
But I see no way to adapt the proof of Theorem~\ref{T.prod_bdd}.

Moving on to other questions, recall that
in Theorem~\ref{T.main}, the result did not depend on
the lengths of the chains $C_j.$
This suggests

\begin{question}\label{Q.2_vs_C}
\textup{(}Cf.~\cite{indep_of_n}, \cite[Conjecture 1]{KR}\textup{)}
If $P$ is a finite-dimensional poset, and $C$ any chain of
more than one element, must $\dim(P\times C) = \dim(P\times \2)$?
\end{question}

The next question at first seemed ``obviously'' to have
an affirmative answer; but I see no argument proving~it.

\begin{question}\label{Q.Px2x2}
If $P$ is a finite-dimensional poset,
and $\dim(P\times \2) = \dim(P) + 1,$
must $\dim(P\times\nolinebreak \2\times\nolinebreak \2) = \dim(P) + 2$?
\end{question}

Much more generally (but much more vaguely), one may ask

\begin{question}\label{Q.P123}
Given finite-dimensional posets
$P_0,$ $P_1,$ $P_2,$ if we know their dimensions,
and those of $P_0\times P_1,$ $P_0\times P_2$ and $P_1\times P_2,$
what can we say about $\dim(P_0\times P_1\times P_2)$?
\textup{(}Anything more than that it is greater than
or equal to the dimensions of each of
the pairwise products, and less than or equal to the values
$\dim(P_i\times P_j)+\dim(P_k)$ for
all choices of $\{i,\,j,\,k\}=\{0,1,2\}$?\textup{)}
\end{question}

One example of ``misbehavior'' on this front:
Suppose $P$ is an antichain of more than one element, and
$P'=P\times \2.$
Now $P,$ $P^2,$ and $P^3$ are all
antichains, hence all have dimension~$2;$
but $P',$ $P'^2,$ and $P'^3$
have the forms $P\times\2,$ $P^2\times \2^2,$ and $P^3\times \2^3,$
and from~\eqref{d.dim_sum} we see that they
have dimensions $2,$ $2,$ and $3$ respectively.
So the dimensions of three
posets and of their pairwise direct products do not
determine the dimension of the product of all three
(the cases contrasted being where all three posets are $P,$
and where all three are $P').$

\section{Absorbency}\label{S.abs}

The following concept might be helpful
in studying questions of the sort we have been considering.

\begin{definition}\label{D.abs}
If $P$ is a finite-dimensional poset, let us define its
{\em absorbency} to be
\begin{equation}\begin{minipage}[c]{35pc}\label{d.abs}
$\abs(P)\ =$ the largest natural number $n$ such that
$\dim(P\times\prod_{i\in n} T_i) = \dim(P)$ for every
$\!n\!$-tuple of chains $(T_i)_{i\in n}.$
\end{minipage}\end{equation}
\end{definition}
As an example,
\begin{equation}\begin{minipage}[c]{35pc}\label{d.absS_n}
For all $d\geq 3,$ $\abs(S_d)\ =\ 2.$
\end{minipage}\end{equation}
Here $\geq$ follows from the $P=S_d$ case of~\eqref{d.dim=d=dim}.
To get the reverse inequality, we will use a result from the literature.
Note that if $\abs(S_d)$ were $\geq 3,$ then since $S_d\subseteq \2^d,$
the value of $\dim(S_d\times S_d)$ would be
$\leq \dim(S_d\times \2^d)\leq \dim(S_d\times \2^3)+\dim(\2^{d-3})
= d+(d-3)=2d-3.$
However, Trotter shows in~\cite[Theorem~2]{Trotter89}
that for all $d\geq 3,$ $\dim(S_d\times\nolinebreak S_d)=2d-2.$

An immediate property of absorbency is
\begin{equation}\begin{minipage}[c]{35pc}\label{d.absleqdim}
$\abs(P)\ \leq\ \dim(P),$
\end{minipage}\end{equation}
since for any $n>\dim(P),$ the product of $P$ with, say, $\2^n$ will
contain a copy of $\2^n,$
hence have dimension greater than $\dim(P).$

Here are some other easy properties.

\begin{lemma}\label{L.abs}
Let $P$ and $Q$ be finite-dimensional posets.
\begin{equation}\begin{minipage}[c]{36pc}\label{d.dimPgeqabsQ}
If $\abs(P) \geq \dim(Q),$ \ then \ $\dim(P\times Q) = \dim(P),$
and $\abs(P\times Q) \geq \abs(P)-\dim(Q)+\abs(Q).$
\end{minipage}\end{equation}

The situation where neither the hypothesis of~\eqref{d.dimPgeqabsQ},
nor the corresponding hypothesis with the roles of
$P$ and $Q$ reversed holds is covered by
\begin{equation}\begin{minipage}[c]{35pc}\label{d.dimgeqabs}
If $\dim(Q) \geq \abs(P)$ and $\dim(P) \geq \abs(Q),$ \ then\\[.2em]
$\max(\dim(P),\,\dim(Q))\ \leq\ \dim(P\times Q)\ \leq\ %
\dim(P)+\dim(Q)-\max(\abs(P),\,\abs(Q)).$
\end{minipage}\end{equation}
\end{lemma}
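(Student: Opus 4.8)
The plan is to prove both \eqref{d.dimPgeqabsQ} and \eqref{d.dimgeqabs} by one simple bookkeeping device: given a representation of one of the two factors as a subposet of a product of chains, split that product into two blocks, ``absorb'' one block into the other factor via the definition of absorbency, and count the chains that remain. The first thing I would record is the evident downward monotonicity of absorbency: if $n\le\abs(P)$, then $\dim(P\times\prod_{i\in n}T_i)=\dim(P)$ for every $\!n\!$-tuple of chains $(T_i)_{i\in n}$ --- indeed, one may pad such a tuple with $\abs(P)-n$ singleton chains (legitimate, as posets here are only required to be nonempty), changing neither the product nor its dimension, and then apply Definition~\ref{D.abs}.

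\emph{For \eqref{d.dimPgeqabsQ}:} assuming $\abs(P)\ge\dim(Q)$, I would embed $Q$ in a product of $\dim(Q)$ chains, so that $P\times Q$ embeds in $P$ times that product; since $\dim(Q)\le\abs(P)$, the latter poset has dimension $\dim(P)$ by the monotonicity above, and combining this with the lower bound in \eqref{d.sup_<dim_<sum} gives $\dim(P\times Q)=\dim(P)$. For the absorbency estimate, set $m=\abs(P)-\dim(Q)+\abs(Q)$, which is $\ge\abs(Q)\ge0$ by hypothesis, and let $(T_i)_{i\in m}$ be an arbitrary $\!m\!$-tuple of chains. Splitting it into a block of $\abs(Q)$ chains and a block of $\abs(P)-\dim(Q)$ chains, I would absorb the first block into $Q$, i.e.\ replace $Q$ times that block by an embedding into a product of $\dim(Q)$ chains; then $(P\times Q)\times\prod_{i\in m}T_i$ embeds in $P$ times a product of $\dim(Q)+(\abs(P)-\dim(Q))=\abs(P)$ chains, which again has dimension $\dim(P)=\dim(P\times Q)$. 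As the $\!m\!$-tuple was arbitrary, this forces $\abs(P\times Q)\ge m$.

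\emph{For \eqref{d.dimgeqabs}:} the left-hand inequality is part of \eqref{d.sup_<dim_<sum}. For the right-hand one I would use the symmetry of the statement in $P$ and $Q$ to assume $\max(\abs(P),\abs(Q))=\abs(P)=:a$; the hypothesis then forces $\dim(Q)\ge a$. Embed $Q$ in a product of $\dim(Q)$ chains, split off the first $a$ of them, and absorb that block into $P$ (replacing $P$ times the block by an embedding into a product of $\dim(P)$ chains, using $a=\abs(P)$); then $P\times Q$ embeds in a product of $\dim(P)+(\dim(Q)-a)=\dim(P)+\dim(Q)-\max(\abs(P),\abs(Q))$ chains, which is the asserted bound.

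I do not expect a real obstacle: once the downward-monotonicity remark is in place (and with it the observation that a singleton is an admissible chain), everything is an exercise in the definition of $\abs$ together with \eqref{d.sup_<dim_<sum}. The only thing to watch is to be deliberate about the chain counts after each split, so as not to be off by the size of a block; stating the monotonicity remark once, up front, is what keeps the rest clean.
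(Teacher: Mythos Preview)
Your argument is correct and is essentially the paper's own proof: embed one factor in a product of chains, split that product into two blocks, absorb one block using the definition of $\abs$, and count the remaining chains. The only cosmetic difference is that you make the downward-monotonicity of absorbency explicit (via padding with singleton chains), whereas the paper uses it tacitly when it says $P$ can ``absorb'' a product of $\dim(Q)\le\abs(P)$ chains; and in \eqref{d.dimgeqabs} the paper bounds $\dim(P\times X\times Y)\le\dim(P\times X)+\dim(Y)$ directly via \eqref{d.sup_<dim_<sum} rather than first re-embedding $P\times X$ in $\dim(P)$ chains, but this is the same estimate.
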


\begin{proof}
In~\eqref{d.dimPgeqabsQ}, the first conclusion holds
because $Q$ embeds in a product of $\dim(Q)\leq\abs(P)$ chains,
which $P$ can ``absorb'' without increasing its dimension.

To get the final inequality of~\eqref{d.dimPgeqabsQ},
we must show that the dimension
of $P\times Q$ is not increased on multiplying it by a
product of $\abs(P)-\dim(Q)+\abs(Q)$ chains.
Let us write such a product as $X\times Y,$ where
$X$ is a product of $\abs(P)-\dim(Q)$ chains,
and $Y$ a product of $\abs(Q)$ chains.
Then if we write $P\times Q\times (X\times Y)$ as
$P\times(X\ \times (Q\times Y)),$ we see that
$Q\times Y$ has dimension $\dim(Q)$ (by our choice of $Y),$ hence
$X\ \times (Q\times Y)$ has (by our choice of $X)$ dimension at most
$(\abs(P)-\dim(Q))+\dim(Q) = \abs(P),$ so its
product with $P$ has dimension $\dim(P),$ which
we have noted equals $\dim(P\times Q),$ so we have
indeed shown that the dimension
of $P\times Q$ has not been increased, as required.

In the conclusion of~\eqref{d.dimgeqabs}, the left-hand inequality
is immediate.
The right-hand inequality is equivalent to saying that $\dim(P\times Q)$
is bounded above by both $\dim(P)+\dim(Q)-\abs(P)$
and $\dim(P)+\dim(Q)-\abs(Q),$ so by symmetry it suffices to
prove the former bound.
If we embed $Q$ in a product of $\dim(Q)$ chains,
and break this into a product $X$ of $\abs(P)$ chains
and a product $Y$ of $\dim(Q)-\abs(P)$ chains, then
we have $\dim(P\times Q)\leq \dim(P\times X\times Y)\leq
\dim(P\times X) + \dim(Y) = \dim(P) + (\dim(Q)-\abs(P)),$
as desired.
\end{proof}

We may ask

\begin{question}\label{Q.abs}
Under the hypothesis of~\eqref{d.dimPgeqabsQ},
does equality always hold in the final inequality of that implication?
\end{question}

If we ask the same question about
the final inequality of~\eqref{d.dimgeqabs}, the peculiarities
noted in the final paragraph of \S\ref{S.questions} make trouble:
Using~\eqref{d.dim_sum}, one can show that the poset $P'$ defined
in that paragraph of \S\ref{S.questions} has absorbency $1,$
and deduce that if in~\eqref{d.dimgeqabs} we take both $P$ and $Q$
to be that poset $P',$ the final inequality of~\eqref{d.dimgeqabs}
becomes $2<3.$
But those peculiarities involved disconnected posets,
and I know of no cases not of that sort.
(A poset $P$ is called disconnected if it
can be written as a union of subposets
$P_0$ and $P_1$ such that all elements of
$P_0$ are incomparable with all elements of $P_1,$
and connected if it cannot be so written.)
We can thus ask

\begin{question}\label{Q.cnnctd}
If the posets $P$ and $Q$ in~\eqref{d.dimgeqabs}
are connected, must equality
hold in the final inequality of the conclusion thereof?
\end{question}

Since the case of~\eqref{d.dim_sum} where $P$ is an antichain
gives us a way of computing the
dimension of a disconnected poset from the dimensions of its
connected components, a positive answer to
Question~\ref{Q.cnnctd} (together with the
first assertion of~\eqref{d.dimPgeqabsQ}) would
allow us to compute exactly the dimension of a product of
two arbitrary finite-dimensional posets, given the dimensions and
absorbencies of their connected components.
It would, in particular, imply positive answers to
Questions~\ref{Q.2_vs_C} and~\ref{Q.Px2x2}.

But as long as we do
not know a positive answer to Question~\ref{Q.cnnctd},
here are some further ideas worth considering:

If Question~\ref{Q.2_vs_C} has a negative answer, one could
define variants of the absorbency concept, depending on
the lengths of the chains involved.

If Question~\ref{Q.2_vs_C} has a positive answer
but Question~\ref{Q.Px2x2} does not,
one might define the ``eventual absorbency''
of a finite-dimensional poset $P$ as the supremum, as
$n\to\infty,$ of $(\dim(P) + n) - \dim(P\times \2^n).$

Given a poset $P$ and a nonnegative integer
$d$ about which we know that $\dim(P)\leq d,$ one might define
the absorbency of $P$ {\em relative} to $d$ to
be the largest $n$ such that the product of $P$ with every
$\!n\!$-tuple of chains continues to have dimension $\leq d.$
So, for instance, given finite-dimensional posets $P$ and $Q,$
the absorbency of $P\times Q$ relative to $\dim(P)+\dim(Q)$
will be at least the sum of the absorbencies of $P$ and of $Q.$

\section{Bounded dimension}\label{S.bd-dim}

Let us call a poset {\em bounded} if it has a greatest
and a least element.

(I wish I knew a better term for
this condition, since it has very different properties from
the familiar sense of ``bounded''.
E.g., a subposet of a bounded poset is not, in general, bounded.
However, ``bounded'' is used in this way in places in the literature;
e.g., \cite[p.179, 12th line from bottom]{K+T}.)

This allows us to define
another function which is useful in studying dimensions:

\begin{definition}\label{D.bd-dim}
For $P$ a finite-dimensional poset, the
{\em bounded dimension} of $P,$
denoted $\r{bd\mhyphen dim}(P),$ will be defined to be

\textup{(i)}~~the greatest integer $n$ such that $P$ contains a
bounded subposet $P'$ satisfying $\dim(P')=n,$\\
equivalently,

\textup{(ii)}~~the maximum, over all pairs of elements $p\leq p'$
in $P,$ of $\dim(\{x~|~p\leq x\leq p'\}).$
\end{definition}

Since every bounded subset $P'$ of $P$ is contained in an
interval $\{x~|~p\leq x\leq p'\},$ we see that
the two versions of the above definition are indeed equivalent.
Because of Theorem~\ref{T.prod_bdd},
this function behaves very nicely under direct products:

\begin{lemma}\label{L.bd-dim}
For finite-dimensional posets $P$ and $Q,$
\begin{equation}\begin{minipage}[c]{35pc}\label{d.bd-dim}
$\r{bd\mhyphen dim}(P\times Q)\ =\ %
\r{bd\mhyphen dim}(P)\ +\ \r{bd\mhyphen dim}(Q).$
\end{minipage}\end{equation}
\end{lemma}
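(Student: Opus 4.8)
The plan is to prove the two inequalities separately, using Theorem~\ref{T.prod_bdd} for ``$\geq$'' and a slicing argument for ``$\leq$''.

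For the inequality $\r{bd\mhyphen dim}(P\times Q)\ \geq\ \r{bd\mhyphen dim}(P)+\r{bd\mhyphen dim}(Q)$, I would use version~(i) of Definition~\ref{D.bd-dim}. Pick a bounded subposet $P'\subseteq P$ with $\dim(P')=\r{bd\mhyphen dim}(P)$ and a bounded subposet $Q'\subseteq Q$ with $\dim(Q')=\r{bd\mhyphen dim}(Q)$. Then $P'\times Q'$ is a subposet of $P\times Q$, and it is bounded, its least element being the pair of least elements and its greatest element the pair of greatest elements. By Theorem~\ref{T.prod_bdd}, $\dim(P'\times Q')=\dim(P')+\dim(Q')=\r{bd\mhyphen dim}(P)+\r{bd\mhyphen dim}(Q)$, and since $P'\times Q'$ is a bounded subposet of $P\times Q$, its dimension is a lower bound for $\r{bd\mhyphen dim}(P\times Q)$.

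For the reverse inequality $\r{bd\mhyphen dim}(P\times Q)\ \leq\ \r{bd\mhyphen dim}(P)+\r{bd\mhyphen dim}(Q)$, I would use version~(ii) of the definition. Take a pair of comparable elements $(p,q)\leq(p',q')$ in $P\times Q$ realizing the maximum, so that $\r{bd\mhyphen dim}(P\times Q)=\dim(I)$ where $I=\{(x,y)\mid(p,q)\leq(x,y)\leq(p',q')\}$ is the interval. The key observation is that this interval factors as a direct product of intervals: $I=\{x\mid p\leq x\leq p'\}\times\{y\mid q\leq y\leq q'\}$, because the product order on $P\times Q$ is coordinatewise. Hence by the second inequality of~\eqref{d.sup_<dim_<sum}, $\dim(I)\leq\dim(\{x\mid p\leq x\leq p'\})+\dim(\{y\mid q\leq y\leq q'\})$, and by version~(ii) of Definition~\ref{D.bd-dim} the first summand is $\leq\r{bd\mhyphen dim}(P)$ and the second is $\leq\r{bd\mhyphen dim}(Q)$. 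Combining the two inequalities gives~\eqref{d.bd-dim}.

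I do not anticipate a serious obstacle here; the only point requiring a little care is the identification $P'\times Q'$ bounded (and its least/greatest elements) in the first half, and the factorization of the interval in the second half — both are immediate from the definition of the product order, but they are the crux of why Theorem~\ref{T.prod_bdd} applies and why the upper bound closes. The lemma is essentially a bookkeeping consequence of Theorem~\ref{T.prod_bdd} together with~\eqref{d.sup_<dim_<sum}.
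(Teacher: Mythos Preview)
Your proof is correct and uses the same two ingredients as the paper --- the factorization of intervals $[(p,q),(p',q')]\cong[p,p']\times[q,q']$ and Theorem~\ref{T.prod_bdd}. The only organizational difference is that the paper handles both inequalities in one stroke: since Theorem~\ref{T.prod_bdd} gives the \emph{equality} $\dim([p,p']\times[q,q'])=\dim([p,p'])+\dim([q,q'])$, taking the maximum over all pairs $(p,q)\leq(p',q')$ (equivalently, independently over $p\leq p'$ and $q\leq q'$) yields~\eqref{d.bd-dim} directly, without needing to invoke version~(i) or the subadditivity bound~\eqref{d.sup_<dim_<sum} separately.
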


\begin{proof}
For elements $p\leq p'$ of any poset $P,$ let us write
\begin{equation}\begin{minipage}[c]{35pc}\label{d.[p,p']}
$[\,p,p']\ =\ \{\,p''\in P~|~p\,\leq\,p''\leq\,p'\},$ regarded as
a subposet of $P.$
\end{minipage}\end{equation}

Thus, by version~(ii) of the definition of bounded dimension,
$\r{bd\mhyphen dim}(P\times Q)$ is the greatest of the
values $\dim([(p,q),\,(p',q')])$ for $(p,q)\leq (p',q')$
in $P\times Q.$

But by the order-structure of a direct product,
$[(p,q),\,(p',q')]$ is isomorphic to $[\,p,\,p']\times [\,q,\,q'],$
and by Theorem~\ref{T.prod_bdd} the dimension of this
product is $\dim([\,p,p'])+\dim([\,q,q']).$
Taking the maximum over all pairs $p\leq p'$ and $q\leq q',$
we get $\r{bd\mhyphen dim}(P)+\r{bd\mhyphen dim}(Q).$
\end{proof}

Clearly, for any poset $P,$
\begin{equation}\begin{minipage}[c]{35pc}\label{d.bd,d}
$\r{bd\mhyphen dim}(P)\ \leq\ \dim(P).$
\end{minipage}\end{equation}
However, the two sides  of~\eqref{d.bd,d} can be far from equal, as
can be seen by taking $P=S_d$ for large $d.$
Then the left-hand side of~\eqref{d.bd,d}
is easily seen to be~$1,$ while the right-hand side is~$d.$

The concept of bounded
dimension allows us to get an upper bound on absorbency:
\begin{equation}\begin{minipage}[c]{35pc}\label{d.abs_leq}
$\r{abs}(P)\ \leq\ \dim(P)\,-\,\r{bd\mhyphen dim}(P).$
\end{minipage}\end{equation}

Indeed, if we take the product of $P$ with more
nontrivial chains than the number on the right,
then since $P$ contains a bounded set of dimension
$\r{bd\mhyphen dim}(P),$ and each of the chains we multiply by contains
a nontrivial bounded chain, the whole product will have dimension
$>\r{bd\mhyphen dim}(P) + (\dim(P)-\r{bd\mhyphen dim}(P))=\dim(P).$
(Incidentally, this argument is easily adapted to give the same upper
bound for the possibly larger ``eventual absorbency'' function
defined in the next-to-last paragraph of the preceding section.)

The two sides of~\eqref{d.abs_leq} can also be far from equal.
Indeed, again taking $P=S_d,$ where $d\geq 3,$
we saw in~\eqref{d.absS_n} that
the left-hand side of~\eqref{d.abs_leq} is~$2.$
On the other hand, the right-hand side is $d-1.$

From~\eqref{d.abs_leq} we get a strengthening
of~\eqref{d.absleqdim}; namely, we can add
to that inequality the observation,
\begin{equation}\begin{minipage}[c]{35pc}\label{d.when_abs=dim}
The only finite-dimensional posets $P$ for which
$\abs(P)=\dim(P)$ are the antichains,
\end{minipage}\end{equation}
since a poset that is not an antichain contains a copy of $\2,$
and so has bounded dimension at least~$1.$

Trotter~\cite[Conjecture~2]{Trotter89} suggested that
for all $n\geq 2$ there exist posets $P$ of dimension
$n$ such that $P\times P$ also has dimension~$n.$
Reuter \cite[Theorem~13]{KR} showed that no such $P$ exists for $n=3;$
but it is conceivable that there exist such $P$ for higher $n.$
Such an example would imply a negative answer to
Question~\ref{Q.cnnctd}, in view of~\eqref{d.when_abs=dim}.

It might also be of interest to look at the variants
of the bounded-dimension function in which {\em bounded} is replaced by
{\em bounded above} or {\em bounded below}.

\section{Boolean dimension}\label{S.Boolean}

Another variant of the dimension function, studied,
inter alia, in~\cite{comparing}, \cite{5/6}, \cite{G+N+T},
\cite{boolean}, \cite{T+W}, is based on the concept of a
{\em Boolean representation} of a poset~$P.$
Here one considers families of $d>0$ total orderings
$<_0,\dots,<_{d-1}$ of $|P|,$ which are {\em not} assumed
to be strengthenings of the partial
ordering $<_P$ of $P,$ but merely to have the
property that whether a pair of elements $x\neq y\in P$
satisfies $x<_P y$ is a function of the
set $\{i\in d~|~x<_i y\}.$
For example, the order relation of $S_n$ $(n\geq 3)$
can be so described in terms of the four total orderings of $|S_n|,$
\begin{equation}\begin{minipage}[c]{35pc}\label{d.Sn0}
$a_0 <_0 \dots <_0 a_i <_0 \dots <_0 a_{n-1} <_0
b_0 <_0 \dots <_0 b_i <_0 \dots <_0 b_{n-1},$
\end{minipage}\end{equation}
\begin{equation}\begin{minipage}[c]{35pc}\label{d.Sn1}
$b_0 <_1 \dots <_1 b_i <_1 \dots <_1 b_{n-1} <_1
a_0 <_1 \dots <_1 a_i <_1 \dots <_1 a_{n-1},$
\end{minipage}\end{equation}
\begin{equation}\begin{minipage}[c]{35pc}\label{d.Sn2}
$a_0 <_2 b_0 <_2 \dots <_2 a_i <_2 b_i <_2
\dots <_2 a_{n-1} <_2 b_{n-1},$
\end{minipage}\end{equation}
\begin{equation}\begin{minipage}[c]{35pc}\label{d.Sn3}
$b_0 <_3 a_0 <_3 \dots <_3 b_i <_3 a_i <_3
\dots <_3 b_{n-1} <_3 a_{n-1}.$
\end{minipage}\end{equation}
Namely, $x < y$ in $S_n$ if and only
if on the one hand, $x$ precedes $y$
in~\eqref{d.Sn0} but follows it in~\eqref{d.Sn1} (which together
say that $x$ has the form $a_i$ and $y$ the form $b_j),$
and, further, the relative order of $x$ and
$y$ is the same in~\eqref{d.Sn2} as in~\eqref{d.Sn3} (which says
that $(x,y)$ is {\em not} a pair of the form $(a_i,\,b_i)).$
This is called a {\em Boolean representation} of $S_n$ in
terms of the four total orderings~\eqref{d.Sn0}-\eqref{d.Sn3}.
The {\em Boolean dimension} of a poset $P,$ $\r{bdim}(P),$
is the least $d$ such
that $P$ has a Boolean representation in terms of $d$ total orderings.
This number is always $\leq\dim(P),$ but often strictly less;
e.g., the above example shows that for all $n,$ $\r{bdim}(S_n)\leq 4.$

In the formal definition of a Boolean representation
of $<_P$ in terms of total orderings $<_0,\dots,<_{d-1},$
one begins by mapping $\{(x,y)\in|P|\times |P|\ |\ x\neq y\}$
to $|\2^d|$ by sending $(x,y)$ to the $\!d\!$-tuple which
has $1$ in the $\!i\!$-th position if and only if $x<_i y.$
The ordering of $P$ is then
determined by a function $\tau: |\2^d|\to\{0,1\},$
such that $x<_P y$ if and only if~$\tau$
takes the $\!d\!$-tuple determined by $(x,y)$ to~$1.$
For instance, in the above example describing $S_n,$ $\tau$ is
the function on $|\2^4|$ taking the value~$1$ only
at the two $\!4\!$-tuples $(1,0,0,0)$ and $(1,0,1,1).$
Here the conditions that for a $\!4\!$-tuple to be taken to $1,$
its first entry must be $1$ and its second entry $0$
translate the conditions that
$x$ must precede $y$ in~\eqref{d.Sn0} but not in~\eqref{d.Sn1};
and the condition that the last two entries of the $\!4\!$-tuple
be equal translates the condition that $x$ and $y$ must occur
in the same order in~\eqref{d.Sn2} and in~\eqref{d.Sn3}.
The term ``Boolean dimension'' refers to the fact that any set-map
$\tau: |\2^d|\to 2$ can be expressed by a Boolean word
in $d$ variables.

In~\cite{T+W} it is shown that $\r{bdim}(P)$ agrees with $\dim(P)$
when the latter is~$3,$ and in \cite[Lemma~4]{5/6} the inequality
\begin{equation}\begin{minipage}[c]{35pc}\label{d.bdim_prod}
$\r{bdim}(P\times Q)\ \leq\ \r{bdim}(P)\,+\,\r{bdim}(Q)$
\end{minipage}\end{equation}
is proved.
On the other hand, \mbox{\cite[Lemma~6]{5/6}} shows that
$\r{bdim}(\2^6)=5,$ whence $\r{bdim}(\2^d)\leq\lceil 5d/6\rceil$ for
all $d$~\cite[Theorem~5]{5/6}.
However, \mbox{\cite[Proposition~9]{5/6}} shows that
for each $d,$ the $\!d\!$-th power of a large enough
finite chain has Boolean dimension~$d.$

A complication in this whole subject
is that there are two versions of the definition
of Boolean dimension in the literature, which {\em almost}
agree, but not quite.
Continuing to write $\r{bdim}(P)$ for the dimension function
defined as above, the other, which I
shall denote $\r{leq}\!$-$\!\r{bdim}(P),$
is defined as the least positive integer $d$ such that there exists
a $\!d\!$-tuple of linear orderings of $|P|$ with the property that
for {\em all} pairs $(x,y)$ of elements of $P$ (no longer required to
satisfy $x\neq y),$ the condition $x\leq_P y$ can be expressed as a
Boolean function of the $d$ conditions $x\leq_i y$ $(i\in d).$

It is easy to see that $\r{bdim}(P)\leq \r{leq}\!$-$\!\r{bdim}(P)\!:$
if the condition $\leq_P$ on arbitrary pairs can be expressed
as a Boolean function of the $d$ conditions $x\leq_i y,$
then restricting that Boolean function to pairs of distinct
elements $x\neq y,$ the conditions $x\leq_i y$ become $x<_i y,$ and
we get a Boolean realization of $<_P$ in terms of these relations.

Whether the reverse implication
holds is less obvious, since if we are given a
Boolean realization of $<_P$ in terms of $d$ linear
orderings $<_i,$ that Boolean function applied to the relations
$\leq_i$ might not give $1$ on pairs of the form $(x,x).$

In fact, that reverse implication does
hold, {\em except} when $P$ is a non-singleton antichain.
I do not know of any statement of this result in the literature.
(I am told that workers in the field are aware
of there being two slightly different versions of Boolean dimension,
but this is rarely mentioned in print.
The one point I know of where it is mentioned
is~\cite{comparing}, footnotes to pp.245 and~248.)
So let us prove this almost-equivalence result here.

The proof will use the observation that given any $\!d\!$-tuple
of total orderings $<_i$ $(i\in d)$ of a set $|P|,$ in terms of
which a partial ordering $<_P$ can be described on pairs
of distinct elements by a Boolean
expression, one can replace any subset of the orderings $<_i$ by
the opposite orderings, and still express $<_P$ in terms of the
resulting linear orders; this is done simply by inserting
appropriate ``not'' operators in the Boolean expression for $<_P.$
(The corresponding statement is not true for
expressions of $\leq_P$ in terms of the $\leq_i,$ since
on pairs $(x,y)$ not required to satisfy
$x\neq y,$ the relation $x \geq_i y$ is not
the negation of $x \leq_i y.)$

\begin{lemma}\label{L.leq-Bool}
For all finite posets $P$ other than non-singleton antichains,
$\r{bdim}(P)=\r{leq}\!$-$\!\r{bdim}(P).$

On the other hand, if $P$ is a non-singleton antichain,
then $\r{bdim}(P)=1,$ while $\r{leq}\!$-$\!\r{bdim}(P)=2.$
\end{lemma}

\begin{proof}
For the first assertion, we have noted that
$\r{bdim}(P)\leq\r{leq}\!$-$\!\r{bdim}(P),$ so what
we must prove is the reverse inequality.
A singleton poset can easily be seen to have both
dimensions~$1,$ so assume $P$ is a poset that is not an antichain,
and that $<_P$ can be written as a Boolean expression in
linear orderings $<_0,\dots,<_{d-1}$ on $|P|.$

Since $P$ is {\em not} an antichain, the Boolean expression in question
must assume the value $1$ on at least one string of $0$'s and $1$'s.
If we choose such a string, and
replace the orderings corresponding to the $0$'s (if any)
in that string with the opposite orderings, we get, as discussed in the
paragraph before the statement of this lemma, a realization of $<_P$
using a Boolean expression which now
takes the value $1$ on $(1,\dots,1).$

If we now apply this Boolean expression to the relations
$\leq_0,\dots,\leq_{d-1},$ the result will behave
on pairs $(x,y)$ with $x\neq y$ like our realization of $<_P,$
hence will agree there with $\leq_P,$ while on
pairs $(x,x),$ since these satisfy $x\leq_i x$ for all $i$
and our Boolean expression maps $(1,\dots,1)$ to $1,$
it will also agree with $\leq_P.$
This proves the first assertion of the lemma.

To get the second assertion, let $P$ be a non-singleton antichain.
We can establish $\r{bdim}(P)=1$ by choosing any total ordering $<_0$
of $|P|,$ and using as realizer the Boolean operation $0$
(making elements incomparable regardless of which is larger
under $<_0).$
This does not work for $\r{leq}\!$-$\!\r{bdim}(P)$ because it
would also give $x\nleq_P x$ for $x\in|P|;$ and it is equally
easy to see that none of the other three Boolean operations
in one variable work.
However, if we take any linear ordering $\leq_0$ on $|P|,$
and let $\leq_1$ be the opposite ordering,
we see that the relation $(x\leq_0 y)\wedge(x\leq_1 y)$
is equivalent to $x=y,$ and hence realizes the
antichain ordering $\leq_P;$ so $\r{leq}\!$-$\!\r{bdim}(P)=2.$
\end{proof}

(In the definition of $\r{bdim}(P),$ it is tempting to allow $d=0,$
which would come into play only when $P$ was an antichain,
in which case the element $0$ of the free Boolean ring
on zero generators, which is $\{0,1\},$ would indeed determine
the relation $x<_P y$ on pairs $x\neq y$ as never holding.
However, under this definition, the result~\eqref{d.bdim_prod} would
fail when one of $P,$ $Q$ was a nontrivial antichain and the other
was not an antichain -- the proof of~\eqref{d.bdim_prod} requires
$\r{bdim}(P)$ and $\r{bdim}(Q)$ to be realized by Boolean expressions
on nonempty families of linearizations.
Whether it would be best to allow $d=0$ in the definition
of $\r{bdim},$ making antichains an exception to~\eqref{d.bdim_prod},
or to keep the requirement $d>0$ in the definition
of $\r{bdim},$ is for those in the field to decide.)

Returning to the result $\r{bdim}(\2^6)=5,$
we remark that the $\!5\!$-tuple of orderings of $|\2^6|$ used in
showing that the value is $\leq 5,$ listed in a page-and-a-half-long
table at~\cite[pp.30-31]{5/6}, was apparently found by a
computer search~\cite[last paragraph of p.29]{5/6}, and
shows no evident pattern.
So we ask

\begin{question}\label{Q.5/6}
Are there a family of $5$ orderings of $|\2^6|,$
and a function $|\2^5|\to\{0,1\},$ yielding a Boolean
representation of $\2^6,$ which can be described conceptually,
and for which there is a conceptual proof that they
determine the standard ordering on that set?
\end{question}

(Though as noted above, the $5$ total orderings
of $|2^6|$ used in~\cite{5/6} show no evident pattern,
the description at~\cite[next-to-last paragraph of p.29]{5/6}
of the function $|\2^5|\to\{0,1\}$ is pleasantly simple:
it takes a $\!5\!$-tuple to $1$ if and only if that
$\!5\!$-tuple has at least $4$ terms equal to $1.)$

\section{Other sorts of dimension in the literature}\label{S.other}

A different variant of the dimension of a
poset $P,$ also described in~\cite{comparing}, is
the {\em local dimension,} $\r{ldim}(P),$ the least $d$ such that
there exists a family of linearizations of {\em subposets} of $P$
such that each member of $P$ appears in $\leq d$ of these linearized
subposets, and such that for every pair
$(x,y)$ of distinct elements of $P,$ there are
enough linearized subposets containing both so that the
relationship between $x$ and $y$ in $P$ (an order-relation, or
incomparability) is the relation between their images in the
product of these linearizations.
As with the Boolean dimension, this is $\leq\dim(P),$ and
considerably less for the $S_n$ (in this case always $\leq 3.$
Hint: use $2$ linearizations of the whole set $|S_n|,$
and one additional linearization of each of the $n$
pairs $\{a_i,b_i\}).$
The behavior of the local dimension on product posets
is studied in \cite[\S3]{KMMSSUW}.

Still another variant: \ If instead of looking at lists
of linearizations
of $<_P$ such that every relation $x\not<_P y$ is realized
in at least one member of our list, and letting the dimension of
$P$ be the least cardinality of such a list, one can look at
lists of linearizations with a ``weight'',
a positive real number, attached to each linearization, such that
$x\not<_P y$ if and only if the sum of the weights of the
linearizations for which $x>y$ is at least $1,$
and define the {\em fractional dimension} of $P$ to be the infimum,
over all weighted lists which determine the ordering of $P$
in this way, of the sum of the weights of the listed linearizations.
(The term ``weight'' is not used in the literature; it is my way
of giving an intuitive description of the definition.)
See~\cite[p.5]{KK+WT} and references given there.
\cite{KK+WT} also introduces several dimension-like
functions specific to the type of posets there called
convex geometries.

Less exotic invariants of posets considered in~\cite{comparing} and
elsewhere are
the {\em height,} i.e., the supremum of the cardinalities of chains
contained in $P,$ and the {\em width,} the supremum of the
cardinalities of antichains in $P.$
The function associating to a poset $P$
the largest $n$ such that $P$ contains a subposet isomorphic
to $S_n$ is denoted $\r{se}(P),$ and studied
in \cite{random} and \cite[\S5.2.1]{Trotter}.

The function $\dim(P),$ though usually (as in this note) simply called
the dimension of $P,$ is sometimes, as in the title of~\cite{comparing},
called the {\em Dushnik-Miller} dimension, to distinguish
it amid this plethora of variants.

\section{When an infinite poset has finite dimension}\label{S.inf_fr_fin}

I have left this topic to the end because it assumes
familiarity with a very different
technique from those used in the other sections.
Namely, by a straightforward application of the
{\em Compactness Theorem} of
first-order logic \cite[Theorem VI.2.1(b)]{E+F+T},
\cite[Theorem 6.1.1]{Hodges}, applied to a language with
a constant for each member of $|P|,$
and using version~(i) of Definition~\ref{D.dim} above,
one can immediately verify

\begin{proposition}\label{P.inf_fr_fin}
Let $P$ be a poset of arbitrary cardinality, and $d$ a positive integer.
Then $P$ has dimension $d$ if and only if $d$ is the
supremum of the dimensions of all finite subposets of~$P.$\qed
\end{proposition}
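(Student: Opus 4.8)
The plan is to reduce the statement to a compactness argument. One direction is trivial: if $\dim(P)=n$ then every subposet, in particular every finite one, has dimension $\leq n$, and conversely any poset witnessing dimension $n$ (via the standard example $S_n$ or $2^n$, or more directly by extracting from a representation of $P$ in $n-1$ chains a failure, which must already fail on a finite subposet) shows the supremum of finite-subposet dimensions is at least... wait, let me think more carefully about what's actually needed.



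The plan is to reduce everything to one compactness statement and then prove that statement by invoking the compactness theorem of first-order logic. Write $s$ for the supremum of the dimensions of the finite subposets of $P$. Since dimension is monotone under passage to subposets (immediate from Definition~\ref{D.dim}(ii)), each such dimension is a non-negative integer. I claim the only substantive point is the inequality $\dim(P)\leq s$; granting it, the Proposition follows. Indeed, if $\dim(P)=n$ then $s\leq n$ is automatic, while $s\leq n-1$ would force $\dim(P)\leq s\leq n-1$, a contradiction, so $s=n$. Conversely, if $s=n$, then $\dim(P)\leq s=n$ by the claimed inequality; and a bounded set of non-negative integers attains its supremum, so some finite subposet $P'$ has $\dim(P')=n$, whence $\dim(P)\geq\dim(P')=n$. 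So it suffices to prove: \emph{if every finite subposet of $P$ has dimension $\leq n$, then $\dim(P)\leq n$.}

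To prove this, I would work in the first-order language with one binary relation symbol $\leq$, binary relation symbols $L_0,\dots,L_{n-1}$, and a constant symbol $\underline p$ for each $p\in|P|$, and form the theory $T$ consisting of: the atomic (Robinson) diagram of $(|P|,\leq_P)$, i.e.\ all sentences $\underline p\leq\underline q$ for $p\leq_P q$ and all their negations for $p\not\leq_P q$; for each $k<n$, the axioms that $L_k$ is reflexive, antisymmetric, transitive and total; for each $k<n$, the sentence $\forall x\,\forall y\,(x\leq y\to x\,L_k\,y)$; and the single sentence $\forall x\,\forall y\,\bigl((\bigwedge_{k<n}x\,L_k\,y)\to x\leq y\bigr)$. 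A model $M$ of $T$ yields, by reading off the interpretations of the constants, an injection of $|P|$ into $M$ whose image carries $n$ total orders (the restrictions of the $L_k^M$; a restriction of a total order is a total order), each extending the copy of $\leq_P$ and having intersection exactly that copy. Hence $\dim(P)\leq n$ by Definition~\ref{D.dim}(i).

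It then remains to verify that every finite $T_0\subseteq T$ is satisfiable. Such a $T_0$ mentions only finitely many constants; let $P'$ be the finite subposet of $P$ on the elements they name. By hypothesis $\dim(P')\leq n$, so $\leq_{P'}$ is the intersection of some total orders $L'_0,\dots,L'_{d-1}$ on $|P'|$ with $d\leq n$; padding the list with copies of an arbitrary linearization of $\leq_{P'}$ (which exists by~\eqref{d.lnzn}) gives $n$ total orders whose intersection is still $\leq_{P'}$. Interpreting $\leq$ as $\leq_{P'}$, each $L_k$ as $L'_k$, each named constant as the corresponding element of $P'$, and any other constants arbitrarily, one checks directly that this structure satisfies all the order axioms, the extension axioms, the intersection axiom, and the finitely many diagram sentences of $T_0$ — the last because $\leq_{P'}$ is the restriction of $\leq_P$. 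So $T_0$ is satisfiable, and by compactness $T$ is, completing the argument. (One could equivalently take an ultraproduct of chosen realizers of the finite subposets, over an ultrafilter on the directed set of finite subposets that refines the tail filter; the bookkeeping is identical.) I expect the only delicate point to be the step ``model of $T$ $\Rightarrow$ realizer of $P$ itself'': it is legitimate precisely because $T$ contains the \emph{full} atomic diagram, so the named elements embed and every non-relation $p\not\leq_P q$ is reflected — if instead one only imposed the positive diagram, the constants could collapse and no conclusion about $P$ would follow.
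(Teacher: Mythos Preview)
Your proof is correct and takes essentially the same approach as the paper: both are compactness arguments, the paper invoking ultraproducts directly (over the directed set of finite subposets, with an ultrafilter refining the tail filter) while you invoke the compactness theorem for first-order logic --- a distinction you yourself flag as cosmetic in your closing parenthetical. The reductions are packaged slightly differently (the paper passes straight to the equivalence of ``$\leq_P$ is an intersection of $n$ total orders'' with the same statement for every finite subposet), but your reduction to ``every finite subposet has dimension $\leq n$ implies $\dim(P)\leq n$'' is equivalent and cleanly argued.
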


(I originally had two rather complicated
proofs of this result -- one using a carefully chosen
ultraproduct of the finite subposets of $P,$ the other using
a tricky transfinite recursion;
but Theodore Slaman pointed out that the Compactness Theorem
gives the result immediately.)

Immediate consequences regarding a couple of the other invariants
we have defined are

\begin{corollary}\label{C.bddim_fr_fin}
For any finite-dimensional poset $P,$ its
bounded dimension $\r{bd\mhyphen dim}(P)$
is equal to the supremum of the values of
$\r{bd\mhyphen dim}(P')$ on finite subposets $P'\subseteq P.$\qed
\end{corollary}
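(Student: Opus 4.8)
The plan is to reduce Corollary~\ref{C.bddim_fr_fin} to Proposition~\ref{P.inf_fr_fin} by using characterization~(ii) of bounded dimension, which expresses $\r{bd\mhyphen dim}(P)$ as a maximum of $\dim([p,p'])$ over intervals $[p,p']=\{x\in P\mid p\leq x\leq p'\}$. First I would observe that one inequality is immediate: every finite subposet $P'\subseteq P$ has $\r{bd\mhyphen dim}(P')\leq\r{bd\mhyphen dim}(P)$, since a bounded subposet of $P'$ of dimension $m$ is also a bounded subposet of $P$ of dimension $m$; hence the supremum over finite subposets is $\leq\r{bd\mhyphen dim}(P)$.

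For the reverse inequality, suppose $\r{bd\mhyphen dim}(P)=n$. By Definition~\ref{D.bd-dim}(ii) there exist $p\leq p'$ in $P$ with $\dim([p,p'])=n$. Now $[p,p']$ is itself a poset of dimension $n$, so by Proposition~\ref{P.inf_fr_fin} (applied to the poset $[p,p']$) there is a finite subposet $Q\subseteq[p,p']$ with $\dim(Q)=n$. We may enlarge $Q$ to $Q\cup\{p,p'\}$, still a finite subposet of $[p,p']$, whose dimension is still $n$ (it is sandwiched between $n$ and $\dim([p,p'])=n$). This $Q\cup\{p,p'\}$ is a finite subposet of $P$ which is bounded (it has least element $p$ and greatest element $p'$) and has dimension $n$. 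Taking $P'=Q\cup\{p,p'\}$, we get $\r{bd\mhyphen dim}(P')\geq n$, so the supremum over finite subposets is $\geq n=\r{bd\mhyphen dim}(P)$, completing the argument.

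One should be slightly careful about the degenerate case $n=\r{bd\mhyphen dim}(P)=0$: this happens exactly when $P$ is an antichain, in which case every interval $[p,p']$ is a singleton, every finite subposet has bounded dimension $0$, and both sides equal $0$; so the statement holds trivially there. (If the convention that posets are nonempty is being used throughout, as it is here, then $n$ ranges over nonnegative integers and the argument above covers $n\geq 1$ directly, with $n=0$ handled by this remark.)

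I do not expect any real obstacle: the only slightly subtle point is the legitimacy of applying Proposition~\ref{P.inf_fr_fin} to the interval $[p,p']$ rather than to $P$ itself, and then transporting the resulting finite witness back up to a \emph{bounded} finite subposet of $P$ by throwing in the two endpoints. Since $\dim$ is monotone under taking subposets, adjoining $p$ and $p'$ cannot push the dimension above $\dim([p,p'])=n$, so the dimension of the enlarged finite poset is pinned at $n$. This is exactly the ``little thought'' alluded to in the statement, and it is the crux of the proof.
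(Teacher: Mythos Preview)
Your proof is correct and is exactly the natural deduction from Proposition~\ref{P.inf_fr_fin} that the paper intends; the corollary is stated with a bare \qed\ and no written proof, so you have simply filled in what the author left to the reader. One minor remark: the phrase ``with a little thought'' in the surrounding text refers to the absorbency corollary (Corollary~\ref{C.abs_fr_fin}), not to this one---the bounded-dimension case is the one the author says one can ``easily deduce.''
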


\begin{corollary}\label{C.abs_fr_fin}
For any finite-dimensional poset $P,$ its absorbency
$\r{abs}(P)$ is equal to the {\em infimum} of the values of
$\r{abs}(P')$ over the finite subposets $P'$ of $P$
satisfying $\dim(P')=\dim(P).$\qed
\end{corollary}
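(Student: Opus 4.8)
The plan is to deduce Corollary~\ref{C.abs_fr_fin} from Proposition~\ref{P.inf_fr_fin} by reducing the statement about absorbency to a statement about dimension, and then invoking the Proposition in the appropriate form. Write $d = \dim(P)$ and let $a$ denote the infimum appearing in the statement; we want $\abs(P) = a$. One inequality, $\abs(P) \leq a$, is immediate: for any finite subposet $P' \subseteq P$ with $\dim(P') = d$, and any $n$-tuple of chains $(T_i)_{i\in n}$, the poset $P'\times\prod_{i\in n} T_i$ embeds in $P\times\prod_{i\in n} T_i$, so if the latter has dimension $d$ then so does the former; hence $\abs(P) \leq \abs(P')$ for every such $P'$, giving $\abs(P)\leq a$. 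The content is the reverse inequality: if every finite subposet $P'$ with $\dim(P')=d$ satisfies $\abs(P')\geq a$ — equivalently, $\dim(P'\times\prod_{i\in a} T_i)=d$ for all $a$-tuples of chains — then $\dim(P\times\prod_{i\in a}T_i)=d$ for all such tuples, i.e.\ $\abs(P)\geq a$.

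First I would fix an arbitrary $a$-tuple of chains $(T_i)_{i\in a}$ and set $R = P\times\prod_{i\in a} T_i$. Clearly $\dim(R)\geq\dim(P)=d$ by~\eqref{d.sup_<dim_<sum}, so it suffices to show $\dim(R)\leq d$, and by Proposition~\ref{P.inf_fr_fin} this is equivalent to showing $\dim(R')\leq d$ for every finite subposet $R'\subseteq R$. Given such an $R'$, let $P'\subseteq P$ and $T'_i\subseteq T_i$ be the (finite) projections of $R'$ onto the respective factors; then $R'\subseteq P'\times\prod_{i\in a}T'_i$. The key step is now to choose $P'$ large enough that $\dim(P')=d$: since $\dim(P)=d$, Proposition~\ref{P.inf_fr_fin} (applied to $P$) gives a finite subposet $P_0\subseteq P$ with $\dim(P_0)=d$, and we may enlarge $P'$ to contain $P_0$ while keeping it finite. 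With $\dim(P')=d$, the hypothesis on $P$ applies: $\abs(P')\geq a$, so $\dim(P'\times\prod_{i\in a}T'_i)=d$ (using that each $T'_i$ is a chain, and that $a$-fold products of chains are absorbed by $P'$). Since $R'$ is a subposet of this product, $\dim(R')\leq d$. As $R'$ was arbitrary, Proposition~\ref{P.inf_fr_fin} yields $\dim(R)\leq d$, hence $\dim(R)=d$; and as the $a$-tuple of chains was arbitrary, $\abs(P)\geq a$, completing the proof.

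I expect the main subtlety — and it is only a mild one, which is why the corollary is marked \qed in the excerpt — to be the bookkeeping around enlarging $R'$: one must be careful that after replacing $R'$ by a larger finite subposet of $R$ (to make its $P$-projection contain a $d$-dimensional finite subposet of $P$), the containment $R'\subseteq P'\times\prod T'_i$ and the finiteness of all the pieces are both preserved. This is routine: the union of two finite subposets is finite, projections of finite sets are finite, and passing to a larger finite $R'$ can only increase $\dim(R')$, so proving $\dim(R')\leq d$ for the enlarged set suffices for the original. A secondary point worth a sentence is that the infimum in the statement is attained (or at least that the argument does not need it to be): we use the hypothesis ``$\abs(P')\geq a$ for every finite $P'$ with $\dim(P')=d$'' directly, where $a$ is \emph{defined} as that infimum, so each individual finite $P'$ contributes $\abs(P')\geq a$ without any need to select an extremal one.
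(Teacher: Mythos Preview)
Your argument is correct, and since the paper gives no proof of this corollary (it is marked \qed\ and said to follow from Proposition~\ref{P.inf_fr_fin} ``with a little thought''), your write-up is exactly the natural way to supply the omitted details: apply Proposition~\ref{P.inf_fr_fin} once to $P$ to obtain a finite $d$-dimensional subposet, and once to $P\times\prod_{i\in a}T_i$ to reduce the dimension bound to finite subposets.
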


I wonder about
\begin{question}\label{Q.big_fr_smaller}
Do there exist {\em infinite} cardinals $\kappa$ and $\lambda$ such
that for every poset $P,$
if all subposets $P'\subseteq P$ of cardinality $<\kappa$ have
dimension $\leq\lambda,$ then $P$ itself has dimension $\leq\lambda$?
\end{question}

Proposition~\ref{P.inf_fr_fin} is the corresponding statement
with $\aleph_0$ in the role of $\kappa,$ and the natural number
$d$ in place of the infinite cardinal~$\lambda.$

\section{Acknowledgements}\label{S.ackn}

I am indebted to Jonathan Farley for pointing me to
\cite{S3x2x2}, which started me on this work,
to him and to William Trotter for pointers
to much of the literature cited below, to Richard Stanley
for catching a serious typo in an earlier version,
to Theodore Slaman for the simplified way of seeing
the truth of Proposition~\ref{P.inf_fr_fin}, to Csaba Biro
for informing me that the result proved in Lemma~\ref{L.leq-Bool}
is indeed known to workers in the field, and to several referees
for helpful comments.

\end{document}